\newtheorem{thm}{Theorem}[section]
\newtheorem{thm-defn}[thm]{Theorem/Definition}
\newtheorem{lem}[thm]{Lemma}
\newtheorem{prop}[thm]{Proposition}
\newtheorem{cor}[thm]{Corollary}
\theoremstyle{definition}
\newtheorem{defn}[thm]{Definition}
\theoremstyle{remark}
\numberwithin{equation}{section}
\begin{document}

\pagenumbering{arabic}

\title{Extending $p$-divisible groups and Barsotti-Tate deformation ring in the relative case}
\author{Yong Suk Moon}
\date{}

\maketitle

\begin{abstract}
Let $k$ be a perfect field of characteristic $p > 2$, and let $K$ be a finite totally ramified extension of $W(k)[\frac{1}{p}]$ of ramification degree $e$. We consider an unramified base ring $R_0$ over $W(k)$ satisfying certain conditions, and let $R = R_0\otimes_{W(k)}\mathcal{O}_K$. Examples of such $R$ include $R = \mathcal{O}_K[\![s_1, \ldots, s_d]\!]$ and $R = \mathcal{O}_K\langle t_1^{\pm 1}, \ldots, t_d^{\pm 1}\rangle$. We show that the generalization of Raynaud's theorem on extending $p$-divisible groups holds over the base ring $R$ when $e < p-1$, whereas it does not hold when $R = \mathcal{O}_K[\![s]\!]$ with $e \geq p$. As an application, we prove that if $R$ has Krull dimension $2$ and $e < p-1$, then the locus of Barsotti-Tate representations of $\mathrm{Gal}(\overline{R}[\frac{1}{p}]/R[\frac{1}{p}])$ cuts out a closed subscheme of the universal deformation scheme. If $R = \mathcal{O}_K[\![s]\!]$ with $e \geq p$, we prove that such a locus is not $p$-adically closed. 
\end{abstract}

\tableofcontents

\section{Introduction} \label{sec:1}

Let $k$ be a perfect field of characteristic $p > 2$, and $W(k)$ be its ring of Witt vectors. Let $K$ be a finite totally ramified extension of $W(k)[\frac{1}{p}]$ of ramification degree $e$, and let $\mathcal{O}_K$ be its ring of integers. We consider an unramified base ring $R_0$ over $W(k)$ satisfying certain conditions (cf. Section \ref{sec:2}), and let $R = R_0\otimes_{W(k)}\mathcal{O}_K$. Important examples of such $R$ include the formal power series ring $R = \mathcal{O}_K[\![s_1, \ldots, s_d]\!]$, and $R = \mathcal{O}_K\langle t_1^{\pm 1}, \ldots, t_d^{\pm 1}\rangle$ which is the $p$-adic completion of $\mathcal{O}_K[t_1^{\pm 1}, \ldots, t_d^{\pm 1}]$.  

When $R = \mathcal{O}_K$, Raynaud showed the following theorem on extending $p$-divisible groups.

\begin{thm} \label{thm:1.1} \emph{(\cite[Proposition 2.3.1]{raynaud-groupscheme})}
Let $G$ be a $p$-divisible group over $K$. Suppose that for each $n \geq 1$, $G[p^n]$ extends to a finite flat group scheme over $\mathcal{O}_K$. Then $G$ extends to a $p$-divisible group over $\mathcal{O}_K$, and such an extension is unique up to isomorphism.
\end{thm}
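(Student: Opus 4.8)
The plan is to split the statement into its uniqueness and existence parts, treating uniqueness first since it is formal. For uniqueness I would invoke Tate's full faithfulness theorem: for a complete discrete valuation ring $\mathcal{O}_K$ of mixed characteristic (indeed for any Noetherian integrally closed domain whose fraction field has characteristic $0$), the generic fiber functor $\mathcal{G} \mapsto \mathcal{G}_K$ on $p$-divisible groups is fully faithful, so that $\mathrm{Hom}_{\mathcal{O}_K}(\mathcal{G}, \mathcal{H}) \xrightarrow{\sim} \mathrm{Hom}_K(\mathcal{G}_K, \mathcal{H}_K)$. Given two extensions $\mathcal{G}$ and $\mathcal{G}'$ of $G$, the identity of $G$ then lifts to mutually inverse morphisms $\mathcal{G} \to \mathcal{G}'$ and $\mathcal{G}' \to \mathcal{G}$, which yields the asserted uniqueness up to isomorphism.

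For existence I would use the description of a $p$-divisible group over $\mathcal{O}_K$ of height $h$ as an inductive system $(\mathcal{G}_n)_{n \geq 1}$ of finite flat group schemes, with $\mathcal{G}_n$ killed by $p^n$, of order $p^{nh}$, fitting into exact sequences $0 \to \mathcal{G}_n \to \mathcal{G}_{n+1} \xrightarrow{p^n} \mathcal{G}_{n+1}$ that identify $\mathcal{G}_n$ with $\mathcal{G}_{n+1}[p^n]$. The idea is to manufacture the $\mathcal{G}_n$ canonically out of the given (a priori non-canonical) extensions of the $G[p^n]$ by means of Raynaud's theory of models. Recall that for a finite flat $K$-group scheme $X$ admitting at least one finite flat model over $\mathcal{O}_K$, the set of such models forms a lattice under the order ``$H_1 \leq H_2$ if $\mathrm{id}_X$ extends to $H_1 \to H_2$'', constructed via scheme-theoretic closures of graphs inside products of models; in particular there is a maximal model $X^+$, characterized by the universal property that every morphism $Y_K \to X$ from the generic fiber of a finite flat $\mathcal{O}_K$-group scheme $Y$ extends uniquely to $Y \to X^+$. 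I would set $\mathcal{G}_n := (G[p^n])^+$, which is defined for every $n$ precisely by the hypothesis that each $G[p^n]$ extends.

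It then remains to equip $(\mathcal{G}_n)$ with transition maps and to verify the axioms. By the universal property of the maximal model, the inclusions $G[p^n] \hookrightarrow G[p^{n+1}]$ extend uniquely to morphisms $\mathcal{G}_n \to \mathcal{G}_{n+1}$, and the uniqueness in that property makes the whole system compatible. That each $\mathcal{G}_n$ is killed by $p^n$ and has order $p^{nh}$ is immediate, since both assertions can be read off the generic fiber and are preserved under passage to models.

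The hard part will be the exactness, namely showing that $\mathcal{G}_n \to \mathcal{G}_{n+1}$ is a closed immersion identifying $\mathcal{G}_n$ with the kernel $\mathcal{G}_{n+1}[p^n]$. The delicate point is flatness: over $\mathcal{O}_K$ the kernel of an isogeny of finite flat group schemes need not itself be flat, so one cannot naively form $\mathcal{G}_{n+1}[p^n]$ as a finite flat group scheme. I would address this by taking the scheme-theoretic closure of $G[p^n] = (G[p^{n+1}])[p^n]$ inside $\mathcal{G}_{n+1}$, which is automatically $\mathcal{O}_K$-flat, and then using the maximality (universal property) to show that this closure coincides with $\mathcal{G}_n$ and is annihilated exactly by $p^n$. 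Assembling these identifications for all $n$ produces the exact sequences defining a $p$-divisible group extending $G$; reconciling the maximal model $(G[p^n])^+$ with the scheme-theoretic closure inside $\mathcal{G}_{n+1}$, together with the attendant flatness control, is where essentially all of the content of Raynaud's argument is concentrated.
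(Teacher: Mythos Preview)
The paper does not supply its own proof of this statement: Theorem~1.1 is quoted as Raynaud's result with a citation to \cite[Proposition~2.3.1]{raynaud-groupscheme} and is used only as motivation. The paper's own contribution is Theorem~4.2, whose proof proceeds by an entirely different route---via Kisin modules and the classification of Section~\ref{sec:2}---and moreover requires $e<p-1$, so it does not even recover Theorem~1.1 in full generality. There is therefore nothing in the paper to compare your argument against directly.

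That said, your outline is essentially Raynaud's original strategy, and the uniqueness via Tate's full faithfulness is exactly right (the paper itself invokes \cite[Theorem~4]{tate} for the same purpose in the proof of Theorem~4.2). For existence, your choice $\mathcal{G}_n=(G[p^n])^+$ and the verification that the scheme-theoretic closure of $G[p^n]$ inside $\mathcal{G}_{n+1}$ coincides with $\mathcal{G}_n$ are both correct. The one point where your sketch is genuinely incomplete---and where Raynaud's argument differs in detail---is the passage from ``$\mathcal{G}_n$ is the flat closure of $G[p^n]$ in $\mathcal{G}_{n+1}$'' to ``$\mathcal{G}_n$ is the scheme-theoretic kernel of $p^n$ on $\mathcal{G}_{n+1}$''. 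These agree only if that kernel is already flat, equivalently only if the induced map $\mathcal{G}_{n+1}/\mathcal{G}_n\to\mathcal{G}_1$ of models of $G[p]$ is an isomorphism, and maximality of $\mathcal{G}_1$ gives a map in only one direction. Raynaud handles this not by taking maximal models outright but by a stabilization argument: one fixes arbitrary models $H_m$ of $G[p^m]$, defines $\mathcal{G}_n$ as the eventual (stable) value of the closures of $G[p^n]$ inside $H_m$ as $m\to\infty$, and uses the stabilization at all levels simultaneously to force the needed compatibilities. You are right that this is where the content lies; just be aware that the maximal-model shortcut, as written, does not by itself close the gap.
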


In this paper, we prove that the generalization of Raynaud's theorem holds over the relative base $R$ when the ramification is small $(e < p-1)$. On the other hand, using an example from \cite{vasiu-zink-purity} on purity of $p$-divisible groups, we show that such a statement does not hold when the ramification is large.

\begin{thm} \label{thm:1.2}
Assume $e < p-1$. Let $G$ be a $p$-divisible group over $R[\frac{1}{p}]$. Suppose that for each $n \geq 1$, $G[p^n]$ extends to a finite locally free group scheme over $R$. Then $G$ extends to a $p$-divisible group over $R$, and such an extension is unique up to isomorphism.	

If $e \geq p$ and $R = \mathcal{O}_K[\![s]\!]$, there exists a $p$-divisible group $G$ over $R[\frac{1}{p}]$ such that $G[p^n]$ extends to a finite locally free group scheme over $R$ for each $n$ but $G$ does not extend to a $p$-divisible group over $R$.
\end{thm}

As an application, we study the geometry of the locus of representations arising from $p$-divisible groups over $R$ when $R$ has Krull dimension $2$. Let $\mathcal{G}_R$ be the \'{e}tale fundamental group of $\mathrm{Spec} R[\frac{1}{p}]$. For a fixed absolutely irreducible $\mathbf{F}_p$-representation $V_0$ of $\mathcal{G}_R$, there exists a universal deformation ring which parametrizes the deformations of $V_0$ (\cite{smit}). We say that a finite continuous $\mathbf{Q}_p$-representation $V$ of $\mathcal{G}_R$ is \textit{Barsotti-Tate} if it arises from a $p$-divisible group over $R$, i.e., if there exists a $p$-divisible group $G_R$ over $R$ such that $V \cong T_p(G_R)\otimes_{\mathbf{Z}_p}\mathbf{Q}_p$ where $T_p(G_R)$ denotes the Tate module of $G_R$. For a torsion $\mathbf{Z}_p$-representation $T$ of $\mathcal{G}_R$, we say it is \textit{torsion Barsotti-Tate} if it is a quotient of a finite free $\mathbf{Z}_p$-representation $T_1$ such that $T_1[\frac{1}{p}]$ is Barsotti-Tate. By using Theorem \ref{thm:1.2}, we prove:

\begin{thm} \label{thm:1.3}
Suppose $R$ has Krull dimension $2$ and $e < p-1$. Then the locus of Barsotti-Tate representations of $\mathcal{G}_R$ cuts out a closed subscheme of the universal deformation scheme. 

If $R = \mathcal{O}_K[\![s]\!]$ and $e \geq p$, then the locus of Barsotti-Tate representations is not $p$-adically closed in the following sense: there exists a finite free $\mathbf{Z}_p$-representation $T$ of $\mathcal{G}_R$ such that $T/p^nT$ is torsion Barsotti-Tate for each integer $n \geq 1$ but $T[\frac{1}{p}]$ is not Barsotti-Tate.
\end{thm}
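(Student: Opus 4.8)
The plan is to recast Theorem \ref{thm:1.2} in the language of deformations of $\mathcal{G}_R$-representations: the closedness in the first assertion will be a direct reflection of the extension criterion of Theorem \ref{thm:1.2}, while the second assertion is essentially a restatement of the counterexample produced there. Fix the residual representation $\bar{V}$ of $\mathcal{G}_R$ over a finite field $\mathbf{F}$ of characteristic $p$; under the finiteness guaranteed by the Krull dimension $2$ hypothesis, the (framed) deformation functor is representable by a complete Noetherian local $\mathbf{Z}_p$-algebra $R^{\mathrm{univ}}$ carrying a universal lattice $T^{\mathrm{univ}}$, and the universal deformation scheme is $\mathrm{Spec}\, R^{\mathrm{univ}}$. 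A point valued in a finite extension of $\mathbf{Q}_p$ corresponds to a representation $V$ with a $\mathcal{G}_R$-stable lattice $T$, and the whole point is to describe the subset of such points at which $V$ is Barsotti-Tate.

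For the first assertion, assume $e < p-1$. By Theorem \ref{thm:1.2}, $V$ is Barsotti-Tate if and only if each torsion quotient $T/p^n T$ extends to a finite locally free group scheme over $R$, and the resulting $p$-divisible group is then unique. Thus I would show that, for each $n$, the condition ``$T^{\mathrm{univ}}/p^n$ admits a finite locally free model over $R$'' cuts out a closed subscheme $Z_n \subseteq \mathrm{Spec}\, R^{\mathrm{univ}}$. To do this I would use the classification of finite locally free group schemes over $R$ by Kisin/Breuil-type modules of height $\leq 1$ developed in the course of proving Theorem \ref{thm:1.2}, which organizes the finite flat models of a given torsion representation into a projective scheme over the base; the locus $Z_n$ is then the image of this proper moduli morphism and is therefore closed. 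Since schematic closure of the kernel of $T/p^{n+1}\twoheadrightarrow T/p^n$ gives a model one level lower, the $Z_n$ form a descending chain, and the Barsotti-Tate locus is $\bigcap_n Z_n$: at a point lying in every $Z_n$ the models at all levels glue, by the uniqueness part of Theorem \ref{thm:1.2}, to a single $p$-divisible group over $R$, so the point is genuinely Barsotti-Tate. As $R^{\mathrm{univ}}$ is Noetherian the descending chain of closed subschemes $Z_n$ stabilizes, and the limiting closed subscheme $\mathrm{Spec}\, R^{\mathrm{BT}} \hookrightarrow \mathrm{Spec}\, R^{\mathrm{univ}}$ is the desired locus.

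For the second assertion, let $G$ over $R[\frac{1}{p}]$ be the $p$-divisible group produced by the second half of Theorem \ref{thm:1.2} for $R = \mathcal{O}_K[\![t]\!]$ with $e \geq p$, and set $T = T_p(G)$, a finite free $\mathbf{Z}_p$-representation of $\mathcal{G}_R$. For each $n$ the truncation $G[p^n]$ extends to a finite locally free group scheme $H_n$ over $R$; realizing $H_n$ inside a $p$-divisible group over $R$ (by lifting the truncated Barsotti-Tate group $H_n$ to a $p$-divisible group over $R$, following Illusie, or by exhibiting $H_n$ as a subquotient of one) produces a finite free lattice $T_1^{(n)}$ with $T_1^{(n)}[\frac{1}{p}]$ Barsotti-Tate and a surjection onto $G[p^n](\overline{R}[\frac{1}{p}]) \cong T/p^n T$; hence each $T/p^n T$ is torsion Barsotti-Tate. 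It remains to check that $T[\frac{1}{p}] = T_p(G)\otimes_{\mathbf{Z}_p}\mathbf{Q}_p$ is not Barsotti-Tate. If it were, there would be a $p$-divisible group $G_R$ over $R$ with $T_p(G_R)[\frac{1}{p}] \cong T_p(G)[\frac{1}{p}]$; by the relative full faithfulness of the generic-fibre functor on $p$-divisible groups (Tate's theorem in the relative setting) the group $G_R[\frac{1}{p}]$ would be isogenous to $G$, and combining this with the given extensions of the $G[p^n]$ one would force $G$ itself to extend to $R$, contradicting Theorem \ref{thm:1.2}. This exhibits a compatible family of torsion Barsotti-Tate representations $T/p^n T$ whose limit $T[\frac{1}{p}]$ is not Barsotti-Tate, so the locus is not $p$-adically closed.

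The main obstacle is the construction underlying the closedness in the first assertion: setting up the moduli of finite locally free models (height $\leq 1$ Kisin/Breuil modules) over the two-dimensional base $R$, proving it is proper over $\mathrm{Spec}\, R^{\mathrm{univ}}$ so that the level-$n$ loci $Z_n$ are closed, and using the uniqueness in Theorem \ref{thm:1.2} to glue the level-wise models into an actual $p$-divisible group. For the second assertion the delicate point is the verification that $T_p(G)[\frac{1}{p}]$ fails to be Barsotti-Tate, which rests on a Tate-style full faithfulness and isogeny argument in the relative case; the torsion-level claim, by contrast, is a routine consequence of the extension of $G[p^n]$ together with a lifting of truncated Barsotti-Tate groups to $p$-divisible groups over $R$.
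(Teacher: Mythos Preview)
Your overall strategy for the second assertion is close to the paper's, but your plan for the first assertion is genuinely different from what the paper does, and the obstacle you flag is real.

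For the closedness when $e<p-1$, the paper does \emph{not} build a moduli space of finite locally free models and invoke properness to obtain closed level-$n$ loci $Z_n$. Over the two-dimensional base $R$ no such projectivity statement for height-$1$ Kisin modules is available, and the paper sidesteps this entirely. Instead it works in the de Smit--Lenstra framework for universal deformation rings (the reference \cite{smit}) and checks the abstract criteria of \cite[\S6]{smit}: for an injection $A\hookrightarrow A'$ of artinian coefficient rings, $V_A$ is torsion Barsotti-Tate iff $V_{A'}$ is, and the condition is stable under intersecting two open ideals. Both checks reduce immediately to a single lemma (Lemma~\ref{lem:5.2} in the paper): if $H_R$ is a $p$-power order finite locally free group scheme over $R$ and $0\to T_1\to H_R(\overline{R})\to T_2\to 0$ is exact as $\mathbf{Z}_p[\mathcal{G}_R]$-modules, then $T_1$ and $T_2$ also come from finite locally free group schemes over $R$. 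This lemma is proved by taking the scheme-theoretic closure of $H_1$ inside $H_R$ and showing it is finite locally free, which for $e<p-1$ follows from Vasiu--Zink purity \cite[Corollary~19]{vasiu-zink-purity} after completing at each maximal ideal. Once these criteria hold, \cite[Proposition~6.1]{smit} produces the closed ideal $\mathfrak{a}_{\mathrm{BT}}\subset A_{\mathrm{univ}}$ directly; there is no descending chain of $Z_n$'s and no Noetherian stabilization argument. So the key lemma you are missing is this subquotient-closure property, and with it the deformation-theoretic packaging is purely formal.

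For the second assertion your outline is essentially the paper's, but it can be simplified. Since $R=\mathcal{O}_K[\![t]\!]$ is local, any $p$-power order finite locally free group scheme over $R$ embeds in a $p$-divisible group over $R$, so the mere existence of $G_{n,R}$ already makes $T/p^nT\cong G_{n,R}(\overline{R})$ torsion Barsotti-Tate; you do not need an Illusie-type lifting argument. For the failure at the limit, the paper argues with the integral lattice: if $T=T_p(G)$ were $T_p(G_R)$ for some $G_R$ over $R$, then $G_R\times_R R[\frac{1}{p}]$ and $G$ have the same Tate module, hence are isomorphic (both are \'etale over $R[\frac{1}{p}]$), contradicting Theorem~\ref{thm:1.2} via Tate's uniqueness \cite[Theorem~4]{tate}. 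Your isogeny route to rule out $T[\frac{1}{p}]$ being Barsotti-Tate would require the scheme-theoretic closure of the isogeny kernel to be finite locally free, which is exactly the purity input that is \emph{not} available when $e\geq p$; the paper's formulation in Theorem~\ref{thm:5.8} avoids this by working with $T$ rather than $T[\frac{1}{p}]$.
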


\noindent We give a more precise statement of Theorem \ref{thm:1.3} in Section \ref{sec:5}.

\section*{Acknowledgements}

I would like to express sincere gratitude to Mark Kisin for his guidance while working on this topic. This paper is partly based on author's Ph.D. thesis under his supervision. I also thank Brian Conrad and Tong Liu for helpful discussions.

\section{Relative Breuil-Kisin Classification} \label{sec:2}

We first explain the classification of $p$-divisible groups and finite locally free group schemes over $\mathrm{Spec} R$ via certain Kisin modules, which is proved in \cite{kisin-crystalline} when $R = \mathcal{O}_K$ and generalized in \cite{kim-groupscheme-relative} for the relative case. 

We will work over the relative base rings as considered in \cite{brinon-relative} with some additional mild assumptions. Denote by $W(k)\langle t_1^{\pm 1}, \ldots, t_d^{\pm 1}\rangle$ the $p$-adic completion of the polynomial ring $W(k)[t_1^{\pm 1}, \ldots, t_d^{\pm 1}]$. Let $R_0$ be a ring obtained from $W(k)\langle t_1^{\pm 1}, \ldots, t_d^{\pm 1}\rangle$ by iterations of the following operations:

\begin{itemize}
\item $p$-adic completion of an \'{e}tale extension;
\item $p$-adic completion of a localization;
\item completion with respect to an ideal containing $p$.	
\end{itemize}

\noindent We assume that either $W(k)\langle t_1^{\pm 1}, \ldots, t_d^{\pm 1}\rangle \rightarrow R_0$ has geometrically regular fibers or $R_0$ has Krull dimension less than $2$, and that $k \rightarrow R_0/pR_0$ is geometrically integral and $R_0$ is an integral domain. Furthermore, we suppose that $R_0$ is formally smooth formally finite type over some Cohen ring (cf. \cite[Section 2.2.2]{kim-groupscheme-relative}). In particular, $R_0$ is a regular ring.

$R_0/pR_0$ has a finite $p$-basis given by $\{t_1, \ldots, t_d\}$ in the sense of \cite[Definition 1.1.1]{deJong-dieudonnemodule}. Let $\widehat{\Omega}_{R_0} = \varprojlim_{n} \Omega_{(R_0/p^n)/W(k)}$ be the module of $p$-adically continuous K\"{a}hler differentials. We have $\widehat{\Omega}_{R_0} \cong \bigoplus_{i=1}^d R_0 \cdot d(\log{t_i})$ by \cite[Proposition 2.0.2]{brinon-relative}. The Witt vector Frobenius on $W(k)$ extends (not necessarily uniquely) to $R_0$. We fix such a Frobenius endomorphism $\varphi: R_0 \rightarrow R_0$, and let $R = R_0\otimes_{W(k)}\mathcal{O}_K$ be our base ring. Examples of such $R$ include $R = \mathcal{O}_K\langle t_1^{\pm 1}, \ldots, t_d^{\pm 1}\rangle$ and $R = \mathcal{O}_K[\![s_1, \ldots, s_d]\!]$ (for example, via $s_i = 1+t_i$). 

It will be useful later to consider the following natural maps between base rings. Let $R_{0, g}$ be the $p$-adic completion of $\displaystyle \varinjlim_{\varphi}(R_{0})_{(p)}$ with the induced Frobenius, and denote by $k_g$ the perfect closure $\displaystyle \varinjlim_{\varphi} \mathrm{Frac}(R_0/pR_0)$ of $\mathrm{Frac}(R_0/pR_0)$. By the universal property of $p$-adic Witt vectors, we have a unique continuous (with respect to the $p$-adic topology) morphism $h: W(k_g) \rightarrow R_{0, g}$ commuting with their projections to $k_g$. By unicity, $h$ is compatible with Frobenius endomorphisms. Since $h$ modulo $p$ is an isomorphism and $R_{0, g}$ is $p$-torsion free and $p$-adically complete and separated, $h$ is an isomorphism. We will make use of this isomorphism later when we apply results from classical $p$-adic Hodge theory over $p$-adic fields, since such results will hold for the base ring $R_{0, g}\otimes_{W(k)}\mathcal{O}_K$. Let $b_g: R_0 \rightarrow R_{0, g}$ be the natural morphism compatible with Frobenius. This induces $\mathcal{O}_K$-linearly the base change map $b_g: R \rightarrow R_{0, g}\otimes_{W(k)}\mathcal{O}_K$.

\begin{lem} \label{lem:2.1}
The map $b_g: R_0 \rightarrow R_{0, g}$ is injective. Furthermore, for each integer $n \geq 1$, the map $R_0/(p^n) \rightarrow R_{0, g}/(p^n)$ induced from $b_g$ is injective. 	
\end{lem}

\begin{proof}
Since $R_0/(p)$ is an integral domain, the map $R_0/(p) \rightarrow R_{0, g}/(p) = k_g$ is injective. Thus, $b_g: R_0 \rightarrow R_{0, g}$ is injective as $R_0$ is $p$-adically separated and $R_{0, g}$ is $p$-torsion free. It also follows that $R_0/(p^n) \rightarrow R_{0, g}/(p^n)$ is injective for each $n \geq 1$. 	
\end{proof}

Let $\mathfrak{S} = R_0[\![u]\!]$ equipped with the Frobenius extending that on $R_0$, given by $\varphi: u \mapsto u^p$. Denote by $E(u)$ the Eisenstein polynomial for the extension $K$ over $W(k)[\frac{1}{p}]$.

\begin{defn}
A \textit{quasi-Kisin module of height} $1$ is a pair $(\mathfrak{M}, \varphi_{\mathfrak{M}})$ where
\begin{itemize}
\item $\mathfrak{M}$ is a finitely generated projective $\mathfrak{S}$-module;
\item $\varphi_{\mathfrak{M}}: \mathfrak{M} \rightarrow \mathfrak{M}$ is a $\varphi$-semilinear map such that $\mathrm{coker}(1\otimes\varphi_{\mathfrak{M}}: \mathfrak{S}\otimes_{\varphi, \mathfrak{S}}\mathfrak{M} \rightarrow \mathfrak{M})$ is annihilated by $E(u)$.
\end{itemize} 	
\end{defn}

\noindent Note that for a quasi-Kisin module $\mathfrak{M}$ of height $1$, $1\otimes\varphi_{\mathfrak{M}}: \varphi^*\mathfrak{M} \coloneqq \mathfrak{S}\otimes_{\varphi, \mathfrak{S}}\mathfrak{M} \rightarrow \mathfrak{M}$ is injective since $\mathfrak{M}$ is finite projective over $\mathfrak{S}$ and $\mathrm{coker}(1\otimes\varphi_{\mathfrak{M}})$ is killed by $E(u)$. Let $\mathrm{Mod}_{\mathfrak{S}}(\varphi)$ denote the category of quasi-Kisin modules of height $1$ whose morphisms are $\mathfrak{S}$-module maps compatible with Frobenius. 

Consider the composite $\mathfrak{S} \twoheadrightarrow \mathfrak{S}/u\mathfrak{S} = R_0 \stackrel{\varphi}{\rightarrow} R_0$. Let $\mathrm{Mod}_{\mathfrak{S}}(\varphi, \nabla)$ denote the category whose objects are tuples $(\mathfrak{M}, \varphi_{\mathfrak{M}}, \nabla_{\mathcal{M}})$ where $(\mathfrak{M}, \varphi_{\mathfrak{M}})$ is a quasi-Kisin module of height $1$, $\mathcal{M} \coloneqq \mathfrak{M}\otimes_{\mathfrak{S}, \varphi}R_0$, and $\nabla_{\mathcal{M}}: \mathcal{M} \rightarrow \mathcal{M}\otimes_{R_0}\widehat{\Omega}_{R_0}$ is a topologically quasi-nilpotent integrable connection commuting with $\varphi_{\mathcal{M}} \coloneqq \varphi_{\mathfrak{M}}\otimes\varphi_{R_0}$. (Here, $\nabla_{\mathcal{M}}$ being topologically quasi-nilpotent means that the induced connection on $\mathcal{M}/p\mathcal{M}$ is nilpotent). The morphisms in $\mathrm{Mod}_{\mathfrak{S}}(\varphi, \nabla)$ are $\mathfrak{S}$-module maps compatible with Frobenius and connection. The objects in $\mathrm{Mod}_{\mathfrak{S}}(\varphi, \nabla)$ are called \textit{Kisin modules of height} $1$. The following theorem is proved in \cite{kim:15}.

\begin{thm} \label{thm:2.3} \emph{(cf. \cite[Corollary 6.7 and Remark 6.9]{kim-groupscheme-relative})}
There exists an exact anti-equivalence of categories 
\[
\mathfrak{M}^*: \{p\mbox{-divisible groups over } R\} \rightarrow \mathrm{Mod}_{\mathfrak{S}}(\varphi, \nabla).
\]	
Let $R_0'$ be another unramifed ring satisfying the conditions as above equipped with a Frobenius, and let $b: R_0 \rightarrow R_0'$ be a $\varphi$-equivariant map. Then the formation of $\mathfrak{M}^*$ commutes with the base change $R \rightarrow R' \coloneqq R_0'\otimes_{W(k)}\mathcal{O}_K$ induced $\mathcal{O}_K$-linearly from $b$.
\end{thm}

The classification of $p$-power order finite locally free group schemes over $R$ can be obtained by considering torsion Kisin modules. 

\begin{defn}
A \textit{torsion quasi-Kisin module of height} $1$ is a pair $(\mathfrak{M}, \varphi_{\mathfrak{M}})$ where
\begin{itemize}
\item $\mathfrak{M}$ is a finitely presented $\mathfrak{S}$-module killed by a power of $p$, and of $\mathfrak{S}$-projective dimension $1$;
\item $\varphi_{\mathfrak{M}}: \mathfrak{M} \rightarrow \mathfrak{M}$ is a $\varphi$-semilinear endomorphism such that $\mathrm{coker}(1\otimes\varphi_{\mathfrak{M}}: \varphi^*\mathfrak{M} \rightarrow \mathfrak{M})$ is killed by $E(u)$. 
\end{itemize}	
\end{defn}

Let $\mathrm{Mod}_{\mathfrak{S}}^{\mathrm{tor}}(\varphi)$ denote the category of torsion quasi-Kisin modules of height $1$ whose morphisms are $\mathfrak{S}$-linear maps compatible with $\varphi$. Let $\mathrm{Mod}_{\mathfrak{S}}^{\mathrm{tor}}(\varphi, \nabla)$ denote the category whose objects are tuples $(\mathfrak{M}, \varphi_{\mathfrak{M}}, \nabla_{\mathcal{M}})$ where $(\mathfrak{M}, \varphi_{\mathfrak{M}})$ is a torsion quasi-Kisin module of height $1$, $\mathcal{M} \coloneqq \mathfrak{M}\otimes_{\mathfrak{S}, \varphi}R_0$, and $\nabla_{\mathcal{M}}: \mathcal{M} \rightarrow \mathcal{M}\otimes_{R_0}\widehat{\Omega}_{R_0}$ is a topologically quasi-nilpotent integrable connection commuting with $\varphi_{\mathcal{M}} \coloneqq \varphi_{\mathfrak{M}}\otimes\varphi_{R_0}$. The morphisms in $\mathrm{Mod}_{\mathfrak{S}}^{\mathrm{tor}}(\varphi, \nabla)$ are $\mathfrak{S}$-module maps compatible with $\varphi$ and $\nabla$. The objects are called \textit{torsion Kisin modules of height} $1$.

\begin{lem} \label{lem:2.5}
Let $\mathfrak{M}$ be a torsion quasi-Kisin module of height $1$. Then $1\otimes\varphi_{\mathfrak{M}}: \varphi^*\mathfrak{M} \rightarrow \mathfrak{M}$ is injective.
\end{lem}

\begin{proof}
Let $\mathfrak{S}_g \coloneqq R_{0, g}[\![u]\!]$ equipped with the Frobenius given by $\varphi(u) = u^p$. By the local criterion for flatness, $b_g: R_0 \rightarrow R_{0, g}$ is flat since $R_0/(p) \rightarrow R_{0, g}/(p) = k_g$ is flat and $R_{0, g}$ is $p$-torsion free, and the map $\mathfrak{S} \rightarrow \mathfrak{S}_g$ is flat. Note that $\mathfrak{M}_g \coloneqq \mathfrak{M}\otimes_{\mathfrak{S}}\mathfrak{S}_g$ equipped with $\varphi_{\mathfrak{M}_g} \coloneqq \varphi_{\mathfrak{M}}\otimes\varphi_{\mathfrak{S}_g}$ is a torsion Kisin module of height $1$ over $\mathfrak{S}_g$. 

We first claim that the natural map $b: \mathfrak{M} \rightarrow \mathfrak{M}_g$ is injective. Since $\mathfrak{M}$ has projective dimension $\leq 1$, there exists a short exact sequence $0 \rightarrow \mathfrak{M}_1 \rightarrow \mathfrak{M}_2 \rightarrow \mathfrak{M} \rightarrow 0$ where $\mathfrak{M}_1$ and $\mathfrak{M}_2$ are finite projective $\mathfrak{S}$-modules. $\mathfrak{M}_1$ and $\mathfrak{M}_2$ have the same rank since $\mathfrak{M}$ is killed by a power of $p$. We have a commutative diagram
\[
\begin{CD}
	0 @>>> \mathfrak{M}_1 @>>> \mathfrak{M}_2 @>>> \mathfrak{M} @>>> 0\\
	&&		@VVV 				@VVV				@VVbV\\
	0 @>>> \mathfrak{M}_1\otimes_{\mathfrak{S}}\mathfrak{S}_g @>>> \mathfrak{M}_2\otimes_{\mathfrak{S}}\mathfrak{S}_g @>>> \mathfrak{M}_g @>>> 0
\end{CD}
\]
whose rows are exact. Since $\mathfrak{M}_1$ and $\mathfrak{M}_2$ are projective over $\mathfrak{S}$, the left and middle vertical maps are injective. Furthermore, for $i = 1, 2$, we have $\mathrm{coker}(\mathfrak{M}_i \rightarrow \mathfrak{M}_i\otimes_{\mathfrak{S}}\mathfrak{S}_g) \cong \mathfrak{M}_i\otimes_{\mathfrak{S}}(\mathfrak{S}_g/\mathfrak{S})$ as $\mathfrak{S}$-modules. On the other hand, all elements in the kernel of the induced map $\mathfrak{M}_1\otimes_{\mathfrak{S}}(\mathfrak{S}_g/\mathfrak{S}) \rightarrow \mathfrak{M}_2\otimes_{\mathfrak{S}}(\mathfrak{S}_g/\mathfrak{S})$ are killed by some power of $p$ since $\mathfrak{M}_1[\frac{1}{p}] \cong \mathfrak{M}_2[\frac{1}{p}]$. And $\mathfrak{S}_g/\mathfrak{S}$ is $p$-torsion free since $R_0/(p) \rightarrow R_{0, g}/(p) = k_g$ is injective, so $\mathfrak{M}_1\otimes_{\mathfrak{S}}(\mathfrak{S}_g/\mathfrak{S})$ is $p$-torsion free as $\mathfrak{M}_1$ is projective over $\mathfrak{S}$. Hence, the map $\mathfrak{M}_1\otimes_{\mathfrak{S}}(\mathfrak{S}_g/\mathfrak{S}) \rightarrow \mathfrak{M}_2\otimes_{\mathfrak{S}}(\mathfrak{S}_g/\mathfrak{S})$ is injective. By the snake Lemma, we deduce that $b: \mathfrak{M} \rightarrow \mathfrak{M}_g$ is injective.    

Now, consider the following commutative diagram:
\[
\begin{CD}
	\mathfrak{S}\otimes_{\varphi, \mathfrak{S}}\mathfrak{M} @>1\otimes\varphi_{\mathfrak{M}}>> \mathfrak{M}\\
	@VVV                       @VVbV\\
	\mathfrak{S}_g\otimes_{\varphi, \mathfrak{S}_g}\mathfrak{M}_g @>1\otimes\varphi_{\mathfrak{M}_g}>> \mathfrak{M}_g   
\end{CD}
\]
Since $\varphi: \mathfrak{S} \rightarrow \mathfrak{S}$ is flat by \cite[Lemma 7.1.8]{brinon-relative}, $\mathfrak{S}\otimes_{\varphi, \mathfrak{S}}\mathfrak{M}$ has projective dimension $1$ as a $\mathfrak{S}$-module and is killed by a power of $p$. By the same argument as above, the natural map  $\mathfrak{S}\otimes_{\varphi, \mathfrak{S}}\mathfrak{M} \rightarrow \mathfrak{S}_g\otimes_{\mathfrak{S}}(\mathfrak{S}\otimes_{\varphi, \mathfrak{S}}\mathfrak{M}) \cong \mathfrak{S}_g\otimes_{\varphi, \mathfrak{S}_g}\mathfrak{M}_g$ is injective, which is the left vertical map. The bottom map is injective by \cite[Proposition 2.3.2]{liu-fontaineconjecture} since $R_{0, g} \cong W(k_g)$. Thus, the top map is injective.
\end{proof}

Denote by $(\mathrm{Mod~FI})_{\mathfrak{S}}(\varphi, \nabla)$ the full subcategory of $\mathrm{Mod}_{\mathfrak{S}}^{\mathrm{tor}}(\varphi, \nabla)$ consisting of objects $\mathfrak{M}$ such that $\mathfrak{M} \cong \bigoplus_i \mathfrak{M}_i$ as $\mathfrak{S}$-modules where $\mathfrak{M}_i$'s are projective over $\mathfrak{S}/(p^{n_i})$ for some positive integers $n_i$. The following theorem is shown in \cite{kim-groupscheme-relative}.

\begin{thm} \label{thm:2.6} \emph{(cf. \cite[Proposition 9.5 and Theorem 9.8]{kim-groupscheme-relative})}
There exists an exact fully faithful functor $\mathfrak{M}^*$ from the category of $p$-power order finite locally free group schemes over $R$ to $\mathrm{Mod}_{\mathfrak{S}}^{\mathrm{tor}}(\varphi, \nabla)$ with the following properties:
\begin{itemize}
\item Let $H$ be a $p$-power order finite locally free group scheme over $R$. If $H = \mathrm{ker}(h: G^0 \rightarrow G^1)$ for an isogeny $h$ of $p$-divisible groups over $R$, then there exists a natural isomorphism $\mathfrak{M}^*(H) \cong \mathrm{coker}(\mathfrak{M}^*(h))$ of torsion Kisin modules of height $1$;
\item Let $R_0'$ be another unramified ring satisfying the conditions as above equipped with a Frobenius, and let $b: R_0 \rightarrow R_0'$ be a $\varphi$-equivariant map. Then the formation of $\mathfrak{M}^*$ commutes with the base change $R \rightarrow R' \coloneqq R_0'\otimes_{W(k)}\mathcal{O}_K$ induced $\mathcal{O}_K$-linearly from $b$. 	
\end{itemize}
Moreover, the functor $\mathfrak{M}^*$ induces an anti-equivalence from the category of $p$-power order finite locally free group schemes $H$ over $R$ such that $H[p^n]$ is locally free over $R$ for all $n \geq 1$ to $(\mathrm{Mod~FI})_{\mathfrak{S}}(\varphi, \nabla)$.
\end{thm}

We end this section by recalling some necessary results on connections explained in \cite[Section 10.2]{kim-groupscheme-relative}, which is based on \cite{vasiu}. Let $(\mathfrak{M}, \varphi_{\mathfrak{M}})$ be a quasi-Kisin module of height $1$, and let $\mathcal{M} = \mathfrak{M}\otimes_{\mathfrak{S}, \varphi}R_0$ equipped with the induced Frobenius $\varphi_{\mathfrak{M}}\otimes\varphi_{R_0}$. From \cite[Eq. (6.1), (6.2) and Remark 3.13]{kim-groupscheme-relative}, we have the $R_0$-submodule $\mathrm{Fil}^1 \mathcal{M} \subset \mathcal{M}$ associated with $\mathfrak{M}$ such that $p\mathcal{M} \subset \mathrm{Fil}^1 \mathcal{M}$, $\mathcal{M}/\mathrm{Fil}^1 \mathcal{M}$ is projective over $R_0/(p)$, and $(1\otimes\varphi)(\varphi^*\mathrm{Fil}^1 \mathcal{M}) = p\mathcal{M}$ as $R_0$-modules (cf. \cite[Definition 3.4 and 3.6]{kim-groupscheme-relative} for the frame $(R_0, pR_0, R_0/(p), \varphi_{R_0}, \frac{\varphi_{R_0}}{p})$). Fix an $R_0$-direct factor $\mathcal{M}^1 \subset \mathcal{M}$ which lifts $\mathrm{Fil}^1 \mathcal{M}/p\mathcal{M} \subset \mathcal{M}/p\mathcal{M}$, and let $\tilde{M} \coloneqq (\mathcal{M}+\frac{1}{p}\mathcal{M}^1)\otimes_{R_0, \varphi}R_0 \subset \mathcal{M}\otimes_{R_0, \varphi}R_0[\frac{1}{p}]$. For each integer $n \geq 1$, suppose $\nabla_n: R_0/(p^n)\otimes_{R_0}\mathcal{M} \rightarrow (R_0/(p^n)\otimes_{R_0}\mathcal{M})\otimes_{R_0}\widehat{\Omega}_{R_0}$ is a connection such that the following diagram is commutative: 
\begin{equation} \label{eq:2.1}
\begin{CD}
R_0/(p^n)\otimes_{R_0}\tilde{\mathcal{M}} @>\varphi^*(\nabla_n)>> R_0/(p^n)\otimes_{R_0}\tilde{\mathcal{M}}\otimes_{R_0}\widehat{\Omega}_{R_0}\\
@V1\otimes\varphi VV   @VV(1\otimes\varphi)\otimes \mathrm{id}_{\widehat{\Omega}_{R_0}}V\\
R_0/(p^n)\otimes_{R_0}\mathcal{M} @>\nabla_n>> R_0/(p^n)\otimes_{R_0}\mathcal{M}\otimes_{R_0}\widehat{\Omega}_{R_0}	
\end{CD}
\end{equation}
Here, $\varphi^*(\nabla_n)$ is given by choosing an arbitrary lift of $\nabla_n$ on $R_0/(p^{n+1})\otimes_{R_0}\mathcal{M}$, and $\varphi^*(\nabla_n)$ does not depend on the choice of such a  lift (cf. \cite[Section 3.1.1 Equation (9)]{vasiu}). Identify $\widehat{\Omega}_{R_0} =\bigoplus_{i=1}^d R_0 \cdot d(\log{t_i})$. By passing to a finite Zariski covering of $\mathrm{Spf}(R_0, p)$, we may assume that $\mathcal{M}^1$ and $\mathcal{M}/\mathcal{M}^1$ are free over $R_0$. Fix such a choice of the covering, and fix a $R_0$-basis of $\mathcal{M}$ adapted to the direct factor $\mathcal{M}^1$. By \cite[Section 3.2 Basic Theorem]{vasiu} and its proof, the set of connections $\nabla_1$ on $R_0/(p)\otimes_{R_0} \mathcal{M}$ satisfying the commutative diagram (\ref{eq:2.1}) for $n = 1$ corresponds to the solutions over $R_0/(p)$ of a certain Artin-Schreier system of equations over $R_0/(p)$. In particular, it follows directly that we have finitely many such $\nabla_1$ (cf. \cite[Theorem 2.4.1 (b)]{vasiu}). Furthermore, given a connection $\nabla_n$ on $R_0/(p^n)\otimes_{R_0}\mathcal{M}$, the set of connections $\nabla_{n+1}$ on $R_0/(p^{n+1})\otimes_{R_0}\mathcal{M}$ which lift $\nabla_n$ and satisfy the commutative diagram (\ref{eq:2.1}) for $n+1$ corresponds the solutions over $R_0/(p)$ of a certain Artin-Schreier system of equations over $R_0/(p)$ by \textit{loc. cit.}, and we have finitely many such $\nabla_{n+1}$.

\section{\'{E}tale $\varphi$-modules and Galois Representations} \label{sec:3}

We recall the results in \cite[Section 7]{kim-groupscheme-relative} on associating Galois representations with \'{e}tale $\varphi$-modules in the relative setting. The underlying geometry is based on perfectoid spaces (cf. \cite{scholze-perfectoid}). We will use the results to translate our question on $p$-divisible groups into a question on Kisin modules and \'etale $\varphi$-modules. 

Let $\overline{R}$ denote the union of finite $R$-subalgebras $R'$ of a fixed separable closure of $\mathrm{Frac}(R)$ such that $R'[\frac{1}{p}]$ is \'{e}tale over $R[\frac{1}{p}]$. Then $\mathrm{Spec}\overline{R}[\frac{1}{p}]$ is a pro-universal covering of $\mathrm{Spec}R[\frac{1}{p}]$, and $\overline{R}$ is the integral closure of $R$ in $\overline{R}[\frac{1}{p}]$. Let $\mathcal{G}_R \coloneqq \mathrm{Gal}(\overline{R}[\frac{1}{p}]/R[\frac{1}{p}]) = \pi_1^{\text{\'{e}t}}(\mathrm{Spec}R[\frac{1}{p}], \eta)$ with a choice of a geometric point $\eta$. Choose a uniformizer $\varpi \in \mathcal{O}_K$. For integers $n \geq 0$, we choose compatibly $\varpi_n \in \overline{R}$ such that $\varpi_0 = \varpi$ and $\varpi_{n+1}^p = \varpi_n$, and let $L$ be the $p$-adic completion of $\bigcup_{n \geq 0} K(\varpi_n)$. Then $L$ is a perfectoid field and $(\widehat{\overline{R}}[\frac{1}{p}], \widehat{\overline{R}})$ is a perfectoid affinoid $L$-algebra, where $\widehat{\overline{R}}$ denotes the $p$-adic completion of $\overline{R}$.

Let $L^\flat$ denote the tilt of $L$ as defined in \cite{scholze-perfectoid}, and let $\underline{\varpi} \coloneqq (\varpi_n) \in L^\flat$. Let $(\overline{R}^\flat[\frac{1}{\underline{\varpi}}], \overline{R}^\flat)$ be the tilt of $(\widehat{\overline{R}}[\frac{1}{p}], \widehat{\overline{R}})$. Let $E_{R_\infty}^+ = \mathfrak{S}/p\mathfrak{S}$, and let $\tilde{E}_{R_\infty}^+$ be the $u$-adic completion of $\varinjlim_{\varphi}E_{R_\infty}^+$. Let $E_{R_\infty} = E_{R_\infty}^+[\frac{1}{u}]$ and $\tilde{E}_{R_\infty} = \tilde{E}_{R_\infty}^+[\frac{1}{u}]$. By \cite[Proposition 5.9]{scholze-perfectoid}, $(\tilde{E}_{R_\infty}, \tilde{E}_{R_\infty}^+)$ is a perfectoid affinoid $L^\flat$-algebra, and we have the natural injection $(\tilde{E}_{R_\infty}, \tilde{E}_{R_\infty}^+) \hookrightarrow (\overline{R}^\flat[\frac{1}{\underline{\varpi}}], \overline{R}^\flat)$ given by $u \mapsto \underline{\varpi}$. Let $(\tilde{R}_\infty[\frac{1}{p}], \tilde{R}_\infty)$ be a perfectoid affinoid $L$-algebra whose tilt is $(\tilde{E}_{R_\infty}, \tilde{E}_{R_\infty}^+)$, and let $\mathcal{G}_{\tilde{R}_\infty} = \pi_1^{\text{\'{e}t}}(\mathrm{Spec}\tilde{R}_\infty[\frac{1}{p}], \eta)$. Then we have a continuous map of Galois groups $\mathcal{G}_{\tilde{R}_\infty} \rightarrow \mathcal{G}_R$, which is a closed embedding by \cite[Proposition 5.4.54]{gabber-almost}. By the almost purity theorem in \cite{scholze-perfectoid}, $\overline{R}^\flat[\frac{1}{\underline{\varpi}}]$ can be canonically identified with the $\underline{\varpi}$-adic completion of the affine ring of a pro-universal covering of $\mathrm{Spec}\tilde{E}_{R_\infty}$, and letting $\mathcal{G}_{\tilde{E}_{R_\infty}}$ be the Galois group corresponding to the pro-universal covering, there exists a canonical isomorphism $\mathcal{G}_{\tilde{E}_{R_\infty}} \cong \mathcal{G}_{\tilde{R}_\infty}$.

Now, let $\mathcal{O}_{\mathcal{E}}$ be the $p$-adic completion of $\mathfrak{S}[\frac{1}{u}]$. Note that $\varphi$ on $\mathfrak{S}$ extends naturally to $\mathcal{O}_{\mathcal{E}}$.

\begin{defn}
An \textit{\'{e}tale} $(\varphi, \mathcal{O}_\mathcal{E})$-\textit{module} is a pair $(M, \varphi_M)$ where $M$ is a finitely generated $\mathcal{O}_\mathcal{E}$-module and $\varphi_M: M \rightarrow M$ is a $\varphi$-semilinear endomorphism such that $1\otimes\varphi_M: \varphi^*M \rightarrow M$ is an isomorphism. We say that an \'{e}tale $(\varphi, \mathcal{O}_{\mathcal{E}})$-module is \textit{projective} (resp. \textit{torsion}) if the underlying $\mathcal{O}_{\mathcal{E}}$-module $M$ is projective (resp. $p$-power torsion).
\end{defn}

\noindent Let $\mathrm{Mod}_{\mathcal{O}_{\mathcal{E}}}$ denote the category of \'{e}tale $(\varphi, \mathcal{O}_{\mathcal{E}})$-modules whose morphisms are $\mathcal{O}_\mathcal{E}$-linear maps compatible with Frobenius. Let  $\mathrm{Mod}_{\mathcal{O}_{\mathcal{E}}}^{\mathrm{pr}}$ and  $\mathrm{Mod}_{\mathcal{O}_{\mathcal{E}}}^{\mathrm{tor}}$ respectively denote the full subcategories of projective and torsion objects. 

Note that we have a natural notion of a subquotient, direct sum, and tensor product for \'{e}tale $(\varphi, \mathcal{O}_\mathcal{E})$-modules, and duality is defined for projective and torsion objects. If $(\mathfrak{M}, \varphi_{\mathfrak{M}})$ is a quasi-Kisin module (resp. torsion quasi-Kisin module) of height $1$, then $(\mathfrak{M}\otimes_{\mathfrak{S}}\mathcal{O}_{\mathcal{E}}, \varphi_{\mathfrak{M}}\otimes\varphi_{\mathcal{O}_{\mathcal{E}}})$ is a projective (resp. torsion) \'{e}tale $(\varphi, \mathcal{O}_{\mathcal{E}})$-module since $1\otimes\varphi_{\mathfrak{M}}$ is injective (by Lemma \ref{lem:2.5} for torsion quasi-Kisin modules) and its cokernel is killed by $E(u)$ which is a unit in $\mathcal{O}_\mathcal{E}$. If we denote by $\mathcal{O}_{\mathcal{E}, g}$ the corresponding ring for $R_{0, g}$, then for any \'{e}tale $(\varphi, \mathcal{O}_\mathcal{E})$-module $M$, $M\otimes_{\mathcal{O}_\mathcal{E}, b_g} \mathcal{O}_{\mathcal{E}, g}$ with the induced Frobenius is an \'{e}tale $(\varphi, \mathcal{O}_{\mathcal{E}, g})$-module. If $M$ is a torsion object, we define its \textit{length} to be the length of $\mathcal{O}_{\mathcal{E}, g}$-module $M\otimes_{\mathcal{O}_\mathcal{E}, b_g} \mathcal{O}_{\mathcal{E}, g}$.

We consider $W(\overline{R}^\flat[\frac{1}{\underline{\varpi}}])$ as an $\mathcal{O}_\mathcal{E}$-algebra via mapping $u$ to the Teichm\"uller lift $[\underline{\varpi}]$ of $\underline{\varpi}$, and let $\mathcal{O}_\mathcal{E}^{\mathrm{ur}}$ be the integral closure of $\mathcal{O}_\mathcal{E}$ in $W(\overline{R}^\flat[\frac{1}{\underline{\varpi}}])$. Let $\widehat{\mathcal{O}}_{\mathcal{E}}^{\mathrm{ur}}$ be its $p$-adic completion. Since $\mathcal{O}_\mathcal{E}$ is normal, we have $\mathrm{Aut}_{\mathcal{O}_\mathcal{E}}(\mathcal{O}_\mathcal{E}^{\mathrm{ur}}) \cong \mathcal{G}_{E_{R_\infty}} \coloneqq \pi_1^{\text{\'et}}(\mathrm{Spec}E_{R_\infty})$, and by \cite[Proposition 5.4.54]{gabber-almost} and the almost purity theorem, we have $\mathcal{G}_{E_{R_\infty}} \cong \mathcal{G}_{\tilde{E}_{R_\infty}} \cong \mathcal{G}_{\tilde{R}_\infty}$. This induces $\mathcal{G}_{\tilde{R}_\infty}$-action on $\widehat{\mathcal{O}}_{\mathcal{E}}^{\mathrm{ur}}$. The following is shown in \cite{kim-groupscheme-relative}.

\begin{lem} \label{lem:3.2} \emph{(cf. \cite[Lemma 7.5 and 7.6]{kim-groupscheme-relative})}
We have $(\widehat{\mathcal{O}}_{\mathcal{E}}^{\mathrm{ur}})^{\mathcal{G}_{\tilde{R}_\infty}} = \mathcal{O}_\mathcal{E}$ and the same holds modulo $p^n$. Furthermore, there exists a unique $\mathcal{G}_{\tilde{R}_\infty}$-equivariant 	ring endomorphism $\varphi$ on $\widehat{\mathcal{O}}_{\mathcal{E}}^{\mathrm{ur}}$ lifting the $p$-th power map on $\widehat{\mathcal{O}}_{\mathcal{E}}^{\mathrm{ur}}/(p)$ and extending $\varphi$ on $\mathcal{O}_\mathcal{E}$. The inclusion $\widehat{\mathcal{O}}_{\mathcal{E}}^{\mathrm{ur}} \hookrightarrow W(\overline{R}^\flat[\frac{1}{\underline{\varpi}}])$ is $\varphi$-equivariant where the latter ring is given the Witt vector Frobenius.
\end{lem}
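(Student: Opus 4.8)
The plan is to prove the three assertions in turn, in each case peeling off the $p$-adic completion, reducing to a statement modulo $p$, and then feeding in the almost purity/perfectoid input already set up above. I begin with the invariants. The first observation is that $p$-adic completion does not alter reductions, so $\widehat{\mathcal{O}}_{\mathcal{E}}^{\mathrm{ur}}/p^n = \mathcal{O}_{\mathcal{E}}^{\mathrm{ur}}/p^n$ for every $n$, and that taking $\mathcal{G}_{\tilde{R}_\infty}$-invariants (being left exact) commutes with the inverse limit $\widehat{\mathcal{O}}_{\mathcal{E}}^{\mathrm{ur}} = \varprojlim_n \widehat{\mathcal{O}}_{\mathcal{E}}^{\mathrm{ur}}/p^n$. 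Since $\mathcal{O}_{\mathcal{E}} = \varprojlim_n \mathcal{O}_{\mathcal{E}}/p^n$ is $p$-adically complete, the assertion $(\widehat{\mathcal{O}}_{\mathcal{E}}^{\mathrm{ur}})^{\mathcal{G}_{\tilde{R}_\infty}} = \mathcal{O}_{\mathcal{E}}$ will follow formally once I establish the mod-$p^n$ statements $(\widehat{\mathcal{O}}_{\mathcal{E}}^{\mathrm{ur}}/p^n)^{\mathcal{G}_{\tilde{R}_\infty}} = \mathcal{O}_{\mathcal{E}}/p^n$. To bootstrap from $n=1$ to general $n$, I would run a dévissage on the short exact sequence of $\mathcal{G}_{\tilde{R}_\infty}$-modules
\[
0 \to \widehat{\mathcal{O}}_{\mathcal{E}}^{\mathrm{ur}}/p \xrightarrow{\ p^{n-1}\ } \widehat{\mathcal{O}}_{\mathcal{E}}^{\mathrm{ur}}/p^n \to \widehat{\mathcal{O}}_{\mathcal{E}}^{\mathrm{ur}}/p^{n-1} \to 0,
\]
compare its invariants long exact sequence with the analogous (split) sequence for $\mathcal{O}_{\mathcal{E}}$, and invoke the five lemma; since the mod-$p$ and mod-$p^{n-1}$ vertical maps are isomorphisms by induction, the middle one is too, and no $H^1$-vanishing input is needed.

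The crux is thus the mod-$p$ identity $(\mathcal{O}_{\mathcal{E}}^{\mathrm{ur}}/p)^{\mathcal{G}_{\tilde{R}_\infty}} = \mathcal{O}_{\mathcal{E}}/p = E_{R_\infty}$. As $W(\overline{R}^\flat[\frac{1}{\underline{\varpi}}])/p = \overline{R}^\flat[\frac{1}{\underline{\varpi}}]$, I would first identify $\mathcal{O}_{\mathcal{E}}^{\mathrm{ur}}/p$ with the integral closure of $E_{R_\infty}$ inside $\overline{R}^\flat[\frac{1}{\underline{\varpi}}]$. The almost purity theorem, together with the chain $\mathcal{G}_{E_{R_\infty}} \cong \mathcal{G}_{\tilde{E}_{R_\infty}} \cong \mathcal{G}_{\tilde{R}_\infty}$ recalled above (using that the Frobenius-perfection $E_{R_\infty} \to \tilde{E}_{R_\infty}$ is a universal homeomorphism, hence an isomorphism on \'{e}tale fundamental groups), exhibits $\overline{R}^\flat[\frac{1}{\underline{\varpi}}]$ as the $\underline{\varpi}$-adically completed affine ring of a pro-universal cover of $\mathrm{Spec}\,\tilde{E}_{R_\infty}$. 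I would then Galois-descend: the $\mathcal{G}_{\tilde{R}_\infty}$-invariants of this cover are $\tilde{E}_{R_\infty}$ up to almost-zero error, and descent along $E_{R_\infty} \to \tilde{E}_{R_\infty}$ recovers $E_{R_\infty}$. This computation is the main obstacle: one must reconcile the integral closure of the \emph{imperfect} ring $E_{R_\infty}$ with the perfectoid picture over $\tilde{E}_{R_\infty}$ and, crucially, clear the almost-zero discrepancies to get an honest equality rather than an almost-equality, using the normality of $\mathcal{O}_{\mathcal{E}}$ to rule out spurious invariants.

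For the Frobenius I would take $\varphi$ to be the restriction of the Witt-vector Frobenius $\varphi_W$ of $W(\overline{R}^\flat[\frac{1}{\underline{\varpi}}])$. Because $\varphi_W$ restricts to $\varphi$ on $\mathcal{O}_{\mathcal{E}}$ — checked on the topological generator $u = [\underline{\varpi}]$, where $\varphi_W([\underline{\varpi}]) = [\underline{\varpi}^p]$ matches $\varphi(u) = u^p$, and on $R_0$ through its $\varphi$-equivariant structure maps — it sends any element satisfying a monic equation over $\mathcal{O}_{\mathcal{E}}$ to another such element, hence preserves the integral closure $\mathcal{O}_{\mathcal{E}}^{\mathrm{ur}}$; $p$-adic continuity of $\varphi_W$ then gives preservation of the completion $\widehat{\mathcal{O}}_{\mathcal{E}}^{\mathrm{ur}}$. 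The three stated properties are then formal: $\varphi_W$ reduces to the $p$-th power map modulo $p$ on Witt vectors of the perfect ring $\overline{R}^\flat[\frac{1}{\underline{\varpi}}]$, it extends $\varphi$ on $\mathcal{O}_{\mathcal{E}}$ by construction, and it commutes with the $\mathcal{G}_{\tilde{R}_\infty}$-action by functoriality of the Witt vectors; the last point is precisely the $\varphi$-equivariance of the inclusion into $W(\overline{R}^\flat[\frac{1}{\underline{\varpi}}])$.

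Uniqueness I would obtain from a formal-\'{e}taleness lifting argument. The ring $\widehat{\mathcal{O}}_{\mathcal{E}}^{\mathrm{ur}}$ is $p$-adically formally \'{e}tale over $\mathcal{O}_{\mathcal{E}}$, being the $p$-completed integral closure in the maximal \'{e}tale extension furnished by the perfectoid correspondence of the previous step. Hence if $\varphi$ and $\varphi'$ both extend the Frobenius on $\mathcal{O}_{\mathcal{E}}$ and reduce to the $p$-th power map modulo $p$, they agree modulo $p$, and inductively the uniqueness of lifts across the square-zero extension $\widehat{\mathcal{O}}_{\mathcal{E}}^{\mathrm{ur}}/p^{n+1} \twoheadrightarrow \widehat{\mathcal{O}}_{\mathcal{E}}^{\mathrm{ur}}/p^n$ (with fixed restriction to $\mathcal{O}_{\mathcal{E}}$ and fixed reduction) forces agreement modulo $p^{n+1}$; $p$-adic completeness then gives $\varphi = \varphi'$. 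I expect the two genuinely nontrivial inputs to be the mod-$p$ descent of the first part and the formal \'{e}taleness underpinning this uniqueness, both of which are outputs of almost purity, so the whole lemma ultimately rests on that single geometric ingredient.
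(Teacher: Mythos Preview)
The paper does not prove this lemma: it is stated with the attribution ``The following is shown in \cite{kim-groupscheme-relative}'' and cited as \cite[Lemma 7.2.6 and 7.2.7]{kim-groupscheme-relative}, with no argument given. So there is no in-paper proof to compare your proposal against.

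As a standalone sketch, your outline is sound and follows the expected route (reduce to mod $p$ via $p$-adic limits and d\'evissage, identify the mod-$p$ picture via almost purity and the equivalence $\mathcal{G}_{E_{R_\infty}} \cong \mathcal{G}_{\tilde{R}_\infty}$, construct $\varphi$ by restricting the Witt-vector Frobenius, and get uniqueness from formal \'etaleness). Your d\'evissage step is correctly set up and indeed needs no $H^1$ input. One point to tighten: the identification of $\mathcal{O}_{\mathcal{E}}^{\mathrm{ur}}/p$ with ``the integral closure of $E_{R_\infty}$ in $\overline{R}^\flat[\tfrac{1}{\underline{\varpi}}]$'' is not quite right as stated, since $E_{R_\infty}$ is imperfect and $\overline{R}^\flat[\tfrac{1}{\underline{\varpi}}]$ is perfect, so the literal integral closure contains purely inseparable extensions that do not come from reducing \'etale $\mathcal{O}_{\mathcal{E}}$-algebras. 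What you actually want (and what the Galois-descent step uses) is that $\mathcal{O}_{\mathcal{E}}^{\mathrm{ur}}/p$ is the union of finite \emph{\'etale} $E_{R_\infty}$-subalgebras of $\overline{R}^\flat[\tfrac{1}{\underline{\varpi}}]$; this follows from the equivalence between finite \'etale $\mathcal{O}_{\mathcal{E}}$-algebras and finite \'etale $E_{R_\infty}$-algebras for the $p$-adically complete ring $\mathcal{O}_{\mathcal{E}}$, together with the paper's remark that $\mathrm{Aut}_{\mathcal{O}_{\mathcal{E}}}(\mathcal{O}_{\mathcal{E}}^{\mathrm{ur}}) \cong \mathcal{G}_{E_{R_\infty}}$. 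With that correction your descent and your handling of the almost-zero error go through, and the Frobenius and uniqueness paragraphs are fine as written.
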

 
Let $\mathrm{Rep}_{\mathbf{Z}_p}(\mathcal{G}_{\tilde{R}_\infty})$ be the category of finite continuous $\mathbf{Z}_p$-representations of $\mathcal{G}_{\tilde{R}_\infty}$, and let $\mathrm{Rep}_{\mathbf{Z}_p}^{\mathrm{free}}(\mathcal{G}_{\tilde{R}_\infty})$ and $\mathrm{Rep}_{\mathbf{Z}_p}^{\mathrm{tor}}(\mathcal{G}_{\tilde{R}_\infty})$ respectively denote the full subcategories of free and torsion objects. For $M \in \mathrm{Mod}_{\mathcal{O}_\mathcal{E}}$ and $T \in \mathrm{Rep}_{\mathbf{Z}_p}(\mathcal{G}_{\tilde{R}_\infty})$, we define $T(M) \coloneqq (M\otimes_{\mathcal{O}_\mathcal{E}}\widehat{\mathcal{O}}_{\mathcal{E}}^{\mathrm{ur}})^{\varphi = 1}$ and $M(T) \coloneqq (T\otimes_{\mathbf{Z}_p}\widehat{\mathcal{O}}_{\mathcal{E}}^{\mathrm{ur}})^{\mathcal{G}_{\tilde{R}_\infty}}$. Then we have the following proposition from \cite{kim-groupscheme-relative}.

\begin{prop} \label{prop:3.3} \emph{(\cite[Proposition 7.7]{kim-groupscheme-relative})}
The constructions $T(\cdot)$ and $M(\cdot)$ give exact quasi-inverse equivalences of $\otimes$-categories between $\mathrm{Mod}_{\mathcal{O}_\mathcal{E}}$ and $\mathrm{Rep}_{\mathbf{Z}_p}(\mathcal{G}_{\tilde{R}_\infty})$. Moreover, $T(\cdot)$ and $M(\cdot)$ restrict to rank-preserving equivalences of categories between $\mathrm{Mod}_{\mathcal{O}_\mathcal{E}}^{\mathrm{pr}}$ and $\mathrm{Rep}_{\mathbf{Z}_p}^{\mathrm{free}}(\mathcal{G}_{\tilde{R}_\infty})$, and length-preserving equivalences between $\mathrm{Mod}_{\mathcal{O}_\mathcal{E}}^{\mathrm{tor}}$ and $\mathrm{Rep}_{\mathbf{Z}_p}^{\mathrm{tor}}(\mathcal{G}_{\tilde{R}_\infty})$. In both cases, $T(\cdot)$ and $M(\cdot)$ commute with taking duals.  
\end{prop}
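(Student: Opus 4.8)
The plan is to follow Fontaine's dévissage, using the relative period ring $\widehat{\mathcal{O}}_{\mathcal{E}}^{\mathrm{ur}}$ together with the two facts recorded in Lemma \ref{lem:3.2}: the invariance $(\widehat{\mathcal{O}}_{\mathcal{E}}^{\mathrm{ur}})^{\mathcal{G}_{\tilde{R}_\infty}} = \mathcal{O}_\mathcal{E}$, valid compatibly modulo each $p^n$, and the existence of a $\mathcal{G}_{\tilde{R}_\infty}$-equivariant Frobenius $\varphi$ on $\widehat{\mathcal{O}}_{\mathcal{E}}^{\mathrm{ur}}$ extending that on $\mathcal{O}_\mathcal{E}$. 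Since $\widehat{\mathcal{O}}_{\mathcal{E}}^{\mathrm{ur}}$ is $p$-torsion-free and both functors are built from $\otimes_{\mathcal{O}_\mathcal{E}}\widehat{\mathcal{O}}_{\mathcal{E}}^{\mathrm{ur}}$ followed by taking $\varphi = 1$ or $\mathcal{G}_{\tilde{R}_\infty}$-invariants, I would reduce the entire statement to the mod $p$ case, establish that case directly, then reassemble it by induction on the length of torsion objects and by passing to an inverse limit for the projective and free objects.

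For the base case I would work over $E_{R_\infty} = \mathcal{O}_\mathcal{E}/p$, whose separable closure is realized inside $\widehat{\mathcal{O}}_{\mathcal{E}}^{\mathrm{ur}}/p \hookrightarrow \overline{R}^\flat[\frac{1}{\underline{\varpi}}]$. The crucial input is the relative Lang--Artin--Schreier statement of \cite[Lemma 4.1.1]{katz} and \cite[Theorem 7.11]{andreatta}: for a torsion \'etale $(\varphi,\mathcal{O}_\mathcal{E})$-module $M$ killed by $p$, the natural map
\[
T(M)\otimes_{\mathbf{F}_p}\widehat{\mathcal{O}}_{\mathcal{E}}^{\mathrm{ur}}/p \longrightarrow M\otimes_{\mathcal{O}_\mathcal{E}}\widehat{\mathcal{O}}_{\mathcal{E}}^{\mathrm{ur}}/p
\]
is an isomorphism, with $\dim_{\mathbf{F}_p}T(M)$ equal to the length of $M$. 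The mechanism is that, writing $\varphi_M$ in a local basis by an invertible matrix $A$, the system $\varphi(\mathbf{x}) = A\mathbf{x}$ acquires a full set of solutions over the separable closure of $E_{R_\infty}$ that form an $\mathbf{F}_p$-basis of the Frobenius-fixed part; the length count can be read off after base change along $b_g$, where $\mathcal{O}_{\mathcal{E},g}/p = k_g(\!(u)\!)$ is a genuine field and the classical picture applies. Combined with $(\widehat{\mathcal{O}}_{\mathcal{E}}^{\mathrm{ur}}/p)^{\mathcal{G}_{\tilde{R}_\infty}} = \mathcal{O}_\mathcal{E}/p$ and the triviality of $H^1(\mathcal{G}_{\tilde{R}_\infty}, \mathrm{GL}_n(\widehat{\mathcal{O}}_{\mathcal{E}}^{\mathrm{ur}}/p))$, this shows that $T(\cdot)$ and $M(\cdot)$ are quasi-inverse, length-preserving, and $\otimes$-compatible on objects killed by $p$.

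With the base case in hand I would propagate it. Exactness of $T(\cdot)$ reduces to surjectivity of $\varphi - 1$ on $M\otimes_{\mathcal{O}_\mathcal{E}}\widehat{\mathcal{O}}_{\mathcal{E}}^{\mathrm{ur}}$, and exactness of $M(\cdot)$ reduces to the vanishing of $H^1(\mathcal{G}_{\tilde{R}_\infty}, T\otimes_{\mathbf{Z}_p}\widehat{\mathcal{O}}_{\mathcal{E}}^{\mathrm{ur}})$; by dévissage both reduce to the mod $p$ statements just established. Given exactness, an induction on the exponent of a torsion object, using the short exact sequences $0 \to pM \to M \to M/pM \to 0$, upgrades the equivalence, length-preservation, and tensor-compatibility to all of $\mathrm{Mod}_{\mathcal{O}_\mathcal{E}}^{\mathrm{tor}}$ and $\mathrm{Rep}_{\mathbf{Z}_p}^{\mathrm{tor}}(\mathcal{G}_{\tilde{R}_\infty})$. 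For a projective object $M$ I would write $M = \varprojlim_n M/p^nM$ and set $T(M) = \varprojlim_n T(M/p^nM)$; the mod $p$ rank count forces $T(M)$ to be finite free over $\mathbf{Z}_p$ of rank $\mathrm{rk}_{\mathcal{O}_\mathcal{E}}M$, and the quasi-inverse and $\otimes$-compatibility pass to the limit, yielding the rank-preserving equivalence between $\mathrm{Mod}_{\mathcal{O}_\mathcal{E}}^{\mathrm{pr}}$ and $\mathrm{Rep}_{\mathbf{Z}_p}^{\mathrm{free}}(\mathcal{G}_{\tilde{R}_\infty})$. Compatibility with duals is then formal: it follows from the $\mathcal{G}_{\tilde{R}_\infty}$- and $\varphi$-equivariant comparison isomorphism $M\otimes_{\mathcal{O}_\mathcal{E}}\widehat{\mathcal{O}}_{\mathcal{E}}^{\mathrm{ur}} \cong T(M)\otimes_{\mathbf{Z}_p}\widehat{\mathcal{O}}_{\mathcal{E}}^{\mathrm{ur}}$ together with perfectness of the evaluation pairing and the rank and length preservation already obtained.

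The main obstacle is the mod $p$ base case in the relative setting: verifying that $\widehat{\mathcal{O}}_{\mathcal{E}}^{\mathrm{ur}}/p$ genuinely behaves like the ring of functions on a pro-universal covering of $\mathrm{Spec}\,E_{R_\infty}$, so that the relative Lang theorem supplies enough Frobenius-invariants and the higher $\mathcal{G}_{\tilde{R}_\infty}$-cohomology vanishes. This is exactly where the perfectoid identifications of Section \ref{sec:3} (the isomorphisms $\mathcal{G}_{E_{R_\infty}} \cong \mathcal{G}_{\tilde{E}_{R_\infty}} \cong \mathcal{G}_{\tilde{R}_\infty}$ coming from almost purity) and the normality of $\mathcal{O}_\mathcal{E}$ are indispensable, since in the relative framework they replace the elementary structure theory of a complete discretely valued field that is available in Fontaine's original argument.
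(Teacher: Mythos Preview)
The paper does not supply its own proof of Proposition~\ref{prop:3.3}: it is recorded as a result quoted from \cite[Proposition~7.3]{kim-groupscheme-relative}, with the mod~$p$ input attributed to \cite[Lemma~4.1.1]{katz} and \cite[Theorem~7.11]{andreatta}. So there is no argument in the paper to compare your proposal against; the author simply invokes the external references.

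That said, your sketch is the standard Fontaine d\'evissage and is exactly the strategy carried out in those cited sources. The reduction to the $p$-torsion case via the exact sequences $0 \to pM \to M \to M/pM \to 0$, the use of Artin--Schreier/Lang over the separable closure of $E_{R_\infty}$ to produce a full set of $\varphi$-invariants, the passage to inverse limits for projective objects, and the formal compatibility with tensor products and duals are all correct and constitute the content of \cite[Proposition~7.3]{kim-groupscheme-relative}. Your identification of the genuine difficulty---that in the relative setting one needs the perfectoid/almost-purity identifications $\mathcal{G}_{E_{R_\infty}} \cong \mathcal{G}_{\tilde{E}_{R_\infty}} \cong \mathcal{G}_{\tilde{R}_\infty}$ and the normality of $\mathcal{O}_\mathcal{E}$ to make $\widehat{\mathcal{O}}_{\mathcal{E}}^{\mathrm{ur}}/p$ behave as an honest pro-\'etale cover---is also on target, and is precisely why the paper records Lemma~\ref{lem:3.2} and the surrounding Galois-group identifications before stating the proposition. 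In short: your proposal is correct, and it reconstructs the argument the paper is citing rather than one the paper itself gives.
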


For $M$ in $\mathrm{Mod}_{\mathcal{O}_\mathcal{E}}^{\mathrm{pr}}$ (resp. in $\mathrm{Mod}_{\mathcal{O}_\mathcal{E}}^{\mathrm{tor}}$), we define the contravariant functor $T^\vee(\cdot)$ to $\mathrm{Rep}_{\mathbf{Z}_p}(\mathcal{G}_{\tilde{R}_\infty})$ by $T^\vee(M) \coloneqq \mathrm{Hom}_{\mathcal{O}_\mathcal{E}, \varphi}(M, \widehat{\mathcal{O}}_{\mathcal{E}}^{\mathrm{ur}})$ (resp. $\mathrm{Hom}_{\mathcal{O}_\mathcal{E}, \varphi}(M, \widehat{\mathcal{O}}_{\mathcal{E}}^{\mathrm{ur}}\otimes_{\mathbf{Z}_p}\mathbf{Q}_p/\mathbf{Z}_p)$). Note that if we have a short exact sequence of \'etale $(\varphi, \mathcal{O}_\mathcal{E})$-modules $0 \rightarrow M_1 \rightarrow M_2 \rightarrow M \rightarrow 0$ where $M_1, M_2$ are projective over $\mathcal{O}_\mathcal{E}$ and $M$ is $p$-power torsion, then it induces a short exact sequence 
\[
0 \rightarrow T^\vee(M_2) \rightarrow T^\vee(M_1) \rightarrow T^\vee(M) \rightarrow 0
\] 
in $\mathrm{Rep}_{\mathbf{Z}_p}(\mathcal{G}_{\tilde{R}_\infty})$.

Now, if $G_R$ is a $p$-divisible group over $R$, we write $T_p(G_R) \coloneqq \mathrm{Hom}_{\overline{R}}(\mathbf{Q}_p/\mathbf{Z}_p, G_R\times_R \overline{R})$ to be the associated Tate module, which is a finite free $\mathbf{Z}_p$-representation of $\mathcal{G}_R$. By \cite[Corollary 8.2]{kim-groupscheme-relative}, we have a natural $\mathcal{G}_{\tilde{R}_\infty}$-equivariant isomorphism $T^\vee(\mathfrak{M}^*(G_R)\otimes_{\mathfrak{S}}\mathcal{O}_\mathcal{E}) \cong T_p(G_R)$. If $H$ is a $p$-power order finite locally free group scheme over $R$, then $H(\overline{R})$ is a finite torsion $\mathbf{Z}_p$-representation of $\mathcal{G}_R$. By \cite[Proposiiton 9.10]{kim-groupscheme-relative}, there exists a natural $\mathcal{G}_{\tilde{R}_\infty}$-equivariant isomorphism $T^\vee(\mathfrak{M}^*(H)\otimes_{\mathfrak{S}}\mathcal{O}_\mathcal{E}) \cong H(\overline{R})$, and if $H = \mathrm{ker}(h: G^0 \rightarrow G^1)$ for some isogeny $h$ of $p$-divisible groups over $R$, then the isomorphism $T^\vee(\mathfrak{M}^*(H)\otimes_{\mathfrak{S}}\mathcal{O}_\mathcal{E}) \cong H(\overline{R})$ is compatible with the isomorphisms $T^\vee(\mathfrak{M}^*(G^i)\otimes_{\mathfrak{S}}\mathcal{O}_\mathcal{E}) \cong T_p(G^i), ~i = 0, 1$.

Note that any $p$-divisible group over $R[\frac{1}{p}]$ is \'{e}tale, so the category of $p$-divisible groups over $R[\frac{1}{p}]$ is equivalent to the category of finite free $\mathbf{Z}_p$-representations of $\mathcal{G}_R$. If we are given a $p$-divisible group $G$ over $R[\frac{1}{p}]$, then the corresponding Galois representation is given by $T_p(G) = \mathrm{Hom}_{\overline{R}[\frac{1}{p}]}(\mathbf{Q}_p/\mathbf{Z}_p, G\times_{R[\frac{1}{p}]}\overline{R}[\frac{1}{p}])$. By Proposition \ref{prop:3.3}, there exists a unique (up to isomorphism) projective \'{e}tale $(\varphi, \mathcal{O}_\mathcal{E})$-module $M$ such that $T^\vee(M) \cong T_p(G)$ as $\mathcal{G}_{\tilde{R}_\infty}$-representations. We remark that if $G$ extends to a $p$-divisible group $G_R$ over $R$, then $T_p(G_R) = T_p(G)$ as $\mathcal{G}_R$-representations.

\section{Extending $p$-divisible Groups} \label{sec:4}

We now prove the generalization of Raynaud's theorem for the relative base $R$ when $e < p-1$, and use an example in \cite{vasiu-zink-purity} on purity of $p$-divisible groups to show that when the ramification is large, such a generalization does not hold. We first consider the special case when the base ring $R_0$ as in Section \ref{sec:2} is equal to the formal power series ring over a Cohen ring.

\begin{prop} \label{prop:4.1}
Suppose $R_0 = \mathcal{O}[\![s_1, \ldots, s_r]\!]$ 	over a Cohen ring $\mathcal{O}$ and $e < p-1$. Let $G$ be a $p$-divisible group over $R[\frac{1}{p}]$, and let $n \geq 1$ be an integer. Suppose that $G[p^n]$ extends to a finite flat group scheme $G_{n, R}$ over $R$. Then for each integer $1 \leq m \leq n$, the group scheme $G_{n, R}[p^m]$ is finite flat over $R$. 

Furthermore, if $H$ is another finite flat group scheme over $R$ extending $G[p^n]$ and if we identify the associated \'etale $(\varphi, \mathcal{O}_{\mathcal{E}})$-modules $M_n \coloneqq \mathfrak{M}^*(G_{R, n})\otimes_{\mathfrak{S}}\mathcal{O}_{\mathcal{E}} = \mathfrak{M}^*(H)\otimes_{\mathfrak{S}}\mathcal{O}_{\mathcal{E}}$, then $\mathfrak{M}^*(G_{R, n}) = \mathfrak{M}^*(H)$ as $\mathfrak{S}$-submodules of $M_n$ with compatible Frobenius.
\end{prop}

\begin{proof}
Let $M$ be the projective \'{e}tale $(\varphi, \mathcal{O}_\mathcal{E})$-module such that $T^\vee(M) = T_p(G)$ as $\mathcal{G}_{\tilde{R}_\infty}$-representations. Denote $\mathfrak{M}_n = \mathfrak{M}^*(G_{n, R})$. Since $T_p(G[p^n]) \cong T_p(G)/p^nT_p(G)$, we have $M_n = \mathfrak{M}_n\otimes_{\mathfrak{S}}\mathcal{O}_\mathcal{E} \cong M/p^nM$ as \'etale $(\varphi, \mathcal{O}_\mathcal{E})$-modules. 

For proving the first statement, we can make the following choice of Frobenius on $R_0$ without loss of generality. Let $k' = \mathcal{O}/(p)$. Note that since $R_0/pR_0 \cong k'[\![s_1, \ldots, s_r]\!]$ has a finite $p$-basis, we have $[k' : k'^p] < \infty$, i.e., $k'$ has a finite $p$-basis. Choose a Frobenius $\varphi_{\mathcal{O}}: \mathcal{O} \rightarrow \mathcal{O}$ lifting the natural Frobenius on $W(k)$, and equip $R_0$ with Frobenius given by $\varphi_{\mathcal{O}}$ and $\varphi(s_i) = s_i^p$. Let $b_0: R_0 \rightarrow \mathcal{O}$ be the $\mathcal{O}$-linear map given by $s_i \mapsto 0$, which is $\varphi$-equivariant. Let $b_g: R_0 \rightarrow R_{0, g}\cong W(k_g)$ be the $\varphi$-equivariant map considered in Section \ref{sec:2}. Note that $\mathfrak{M}_n\otimes_{\mathfrak{S}, b_g}W(k_g)[\![u]\!]$ and $\mathfrak{M}_n\otimes_{\mathfrak{S}, b_0}\mathcal{O}[\![u]\!]$ with the induced diagonal Frobenius are torsion quasi-Kisin modules of height $1$ over $W(k_g)[\![u]\!]$ and $\mathcal{O}[\![u]\!]$ respectively. Denote by $I_j$ the $j$-th Fitting ideal of $\mathfrak{M}_n$ over $\mathfrak{S}_n \coloneqq \mathfrak{S}/p^n\mathfrak{S}$. Let $I_{j, 0}$ and $I_{j, g}$ be the $j$-th Fitting ideal of $\mathfrak{M}_n\otimes_{\mathfrak{S}, b_g}W(k_g)[\![u]\!]$ and $\mathfrak{M}_n\otimes_{\mathfrak{S}, b_0}\mathcal{O}[\![u]\!]$ over $W(k_g)[\![u]\!]/(p^n)$ and $\mathcal{O}[\![u]\!]/(p^n)$ respectively. Then $I_{j, 0}$ and $I_{j, g}$ are given by the images of $I_j$ under the corresponding maps $b_0$ and $b_g$ respectively.

Let $h$ be the height of $G$. Since $e < p-1$, we deduce from \cite[Lemma 4.3.1 and Corollary 4.2.5]{liu-fontaineconjecture} that $\mathfrak{M}_n\otimes_{\mathfrak{S}, b_g}W(k_g)[\![u]\!]$ is free of rank $h$ over $W(k_g)[\![u]\!]/(p^n)$. Furthermore, if we denote by $\mathcal{O}_g$ the $p$-adic completion of $\varinjlim_{\varphi} \mathcal{O}_{(p)}$ with the induced Frobenius and $\kappa \coloneqq \varinjlim_{\varphi} \mathcal{O}/(p)$, then by the universal property of $p$-adic Witt vectors as in Section \ref{sec:2}, $\mathcal{O}_g \cong W(\kappa)$ compatibly with Frobenius endomorphisms. The map $\mathcal{O}[\![u]\!]/(p^n) \rightarrow W(\kappa)[\![u]\!]/(p^n)$ is faithfully flat, and the induced torsion Kisin module $(\mathfrak{M}_n\otimes_{\mathfrak{S}, b_0}\mathcal{O}[\![u]\!])\otimes_{\mathcal{O}[\![u]\!]}W(\kappa)[\![u]\!]$ is free of rank $h$ over $W(\kappa)[\![u]\!]/(p^n)$ by \textit{loc. cit.} Hence, $\mathfrak{M}_n\otimes_{\mathfrak{S}, b_0}\mathcal{O}[\![u]\!]$ is free of rank $h$ over $\mathcal{O}[\![u]\!]/(p^n)$. We obtain
\begin{align*}
	I_{j, g} &= 
	\begin{cases}
	0 & \text{if } j < h\\
	W(k_g)[\![u]\!]/(p^n) & \text{if }	 j \geq h,
	\end{cases}\\
	I_{j, 0} &=
	\begin{cases}
	0 & \text{if } j < h\\
	\mathcal{O}[\![u]\!]/(p^n) & \text{if } j \geq h.
	\end{cases}
\end{align*}  
By Lemma \ref{lem:2.1}, the map $\mathfrak{S}_n \rightarrow W(k_g)[\![u]\!]/(p^n)$ induced from $b_g$ is injective. For $j < h$, the image of $I_j$ under $b_g$ in $W(k_g)[\![u]\!]/(p^n)$ is equal to $I_{j, g}$ which is $0$. Thus, $I_j = 0$ if $j < h$. Suppose $j \geq h$. If $I_j$ is contained in the maximal ideal $(p, s_1, \ldots, s_r, u)$ of $\mathfrak{S}_n$, then the image of $I_j$ under $b_0$ would be contained in the maximal ideal of $\mathcal{O}[\![u]\!]/(p^n)$. Since $I_{j, 0} = \mathcal{O}[\![u]\!]/(p^n)$, we have $I_j = \mathfrak{S}_n$. Hence, $\mathfrak{M}_n$ is projective and thus free of rank $h$ over $\mathfrak{S}_n$. By Theorem \ref{thm:2.6}, $G_{n, R}[p^m]$ is finite flat over $R$ for each $m \geq 1$.

Now we show the second statement, for any choice of Frobenius on $R_0$. Suppose that $G[p^n]$ extends to another finite flat group scheme $H$ over $R$, and let $\mathfrak{N} \coloneqq \mathfrak{M}^*(H)$ be the associated torsion Kisin module. Identify $\mathfrak{N}\otimes_{\mathfrak{S}}\mathcal{O}_{\mathcal{E}} = \mathfrak{M}_n\otimes_{\mathfrak{S}}\mathcal{O}_{\mathcal{E}} = M_n$ as \'etale $(\varphi, \mathcal{O}_{\mathcal{E}})$-modules, and consider both $\mathfrak{N}$ and $\mathfrak{M}_n$ as $\mathfrak{S}_n$-submodules of $M_n$. Since $G_{n, R}[p^m]$ is finite flat over $R$ for each $m \geq 1$ and similarly for $H$, and since $M_n$ is projective over $\mathcal{O}_{\mathcal{E}, n} \coloneqq \mathcal{O}_{\mathcal{E}}/(p^n)$, we have by Theorem \ref{thm:2.6} that $\mathfrak{M}_n$ and $\mathfrak{N}$ are projective and thus flat over $\mathfrak{S}_n$. By \cite[Corollary 4.2.5]{liu-fontaineconjecture}, we have $\mathfrak{M}_n\otimes_{\mathfrak{S}, b_g} W(k_g)[\![u]\!] = \mathfrak{N}\otimes_{\mathfrak{S}, b_g} W(k_g)[\![u]\!]$ as $W(k_g)[\![u]\!]$-submodules of $M_n\otimes_{\mathfrak{S}}W(k_g)[\![u]\!]$. Note that by Lemma \ref{lem:2.1}, the induced map $\mathcal{O}_{\mathcal{E}, n} \rightarrow W_n(k_g)[\![u]\!][\frac{1}{u}]$ is injective, and $\mathcal{O}_{\mathcal{E}, n} \cap W_n(k_g)[\![u]\!] = \mathfrak{S}_n$ as subrings of $W_n(k_g)[\![u]\!][\frac{1}{u}]$. Since $\mathfrak{M}_{n}$ is flat over $\mathfrak{S}_n$, we deduce
\[
(\mathfrak{M}_n\otimes_{\mathfrak{S}_n}\mathcal{O}_{\mathcal{E}, n}) \bigcap (\mathfrak{M}_n\otimes_{\mathfrak{S}_n}W_n(k_g)[\![u]\!]) = \mathfrak{M}_n\otimes_{\mathfrak{S}_n}(\mathcal{O}_{\mathcal{E}, n}\bigcap W_n(k_g)[\![u]\!]) = \mathfrak{M}_n\otimes_{\mathfrak{S}_n}\mathfrak{S}_n = \mathfrak{M}_n
\]	
as $\mathfrak{S}_n$-submodules of $\mathfrak{M}_n\otimes_{\mathfrak{S}_n}W_n(k_g)[\![u]\!][\frac{1}{u}] = M_n\otimes_{\mathfrak{S}}W(k_g)[\![u]\!]$, and similarly
\[
(\mathfrak{N}\otimes_{\mathfrak{S}_n}\mathcal{O}_{\mathcal{E}, n}) \bigcap (\mathfrak{N}\otimes_{\mathfrak{S}_n}W_n(k_g)[\![u]\!]) = \mathfrak{N}
\] 
as $\mathfrak{S}_n$-submodules of $\mathfrak{N}\otimes_{\mathfrak{S}_n}W_n(k_g)[\![u]\!][\frac{1}{u}] = M_n\otimes_{\mathfrak{S}}W(k_g)[\![u]\!]$. Since $\mathfrak{M}_n\otimes_{\mathfrak{S}_n}\mathcal{O}_{\mathcal{E}, n} = M_n = \mathfrak{N}\otimes_{\mathfrak{S}_n}\mathcal{O}_{\mathcal{E}, n}$ and $\mathfrak{M}_n\otimes_{\mathfrak{S}_n}W_n(k_g)[\![u]\!] = \mathfrak{N}\otimes_{\mathfrak{S}_n}W_n(k_g)[\![u]\!]$ as submodules of $M_n\otimes_{\mathfrak{S}}W(k_g)[\![u]\!]$, we obtain $\mathfrak{M}_n = \mathfrak{N}$ with compatible Frobenius. 
\end{proof}

\noindent We remark that in the second statement of above Proposition \ref{prop:4.1}, we do not know whether $\mathfrak{M}^*(G_{R, n}) \cong \mathfrak{M}^*(H)$ as Kisin modules, i.e., whether the connections on both sides are compatible.

Now we consider the general base ring $R$ as in Section \ref{sec:2}.

\begin{thm} \label{thm:4.2}
Assume $e < p-1$. Let $G$ be a $p$-divisible group over $R[\frac{1}{p}]$. Suppose that for each $n$, $G[p^n]$ extends to a finite locally free group scheme $G_{n, R}$ over $R$. Then $G$ extends to a $p$-divisible group over $R$, and such an extension is unique up to isomorphism.

If $e \geq p$ and $R = \mathcal{O}_K[\![s]\!]$, then there exists a $p$-divisible group $G$ over $R[\frac{1}{p}]$ such that $G[p^n]$ extends to a finite locally free group scheme $G_{n, R}$ over $R$ for each $n$ but $G$ does not extend to a $p$-divisible group over $R$. 
\end{thm}

\begin{proof}
Suppose $e < p-1$. Let $M$ be the projective \'{e}tale $(\varphi, \mathcal{O}_\mathcal{E})$-module such that $T^\vee(M) = T_p(G)$ as $\mathcal{G}_{\tilde{R}_\infty}$-representations. For each $n \geq 1$, let $\mathfrak{M}_n \coloneqq \mathfrak{M}^*(G_{n, R}) \in \mathrm{Mod}^{\mathrm{tor}}_{\mathfrak{S}}(\varphi, \nabla)$ be the torsion Kisin module of height $1$ corresponding to $G_{n, R}$. We have $\mathfrak{M}_n\otimes_{\mathfrak{S}}\mathcal{O}_\mathcal{E} \cong M_n \coloneqq M/p^nM$ as \'etale $(\varphi, \mathcal{O}_\mathcal{E})$-modules. Let $h$ be the height of $G$.

For each maximal ideal $\mathfrak{q}$ of $R$, denote $\mathfrak{q}_0 \coloneqq \mathfrak{q} \cap R_0 \subset R_0$ the corresponding maximal ideal of $R_0$, and let $b_{\mathfrak{q}}: R_0 \rightarrow \widehat{R}_{0, \mathfrak{q}_0}$ be the natural $\varphi$-equivariant map where $\widehat{R}_{0, \mathfrak{q}_0}$ denotes the $\mathfrak{q}_0$-adic completion of $R_{0, \mathfrak{q}_0}$. By the structure theorem for complete regular local rings, $\widehat{R}_{0, \mathfrak{q}_0}$ is isomorphic to a formal power series ring  $\widehat{R}_{0, \mathfrak{q}_0} \cong \mathcal{O}[\![s_1, \ldots, s_r]\!]$ over a Cohen ring $\mathcal{O}$. We have the induced base change $b_{\mathfrak{q}}: R \rightarrow \widehat{R}_{\mathfrak{q}} \cong \widehat{R}_{0, \mathfrak{q}_0}\otimes_{W(k)}\mathcal{O}_K$, where $\widehat{R}_{\mathfrak{q}}$ is the $\mathfrak{q}$-adic completion of $R_{\mathfrak{q}}$. Denote $\mathfrak{S}_{\mathfrak{q}} \coloneqq \widehat{R}_{0, \mathfrak{q}_0}[\![u]\!]$. For the $p$-divisible group $G\times_{R[\frac{1}{p}], b_{\mathfrak{q}}}\widehat{R}_{\mathfrak{q}}[\frac{1}{p}]$ over $\widehat{R}_{\mathfrak{q}}[\frac{1}{p}]$, note that $(G\times_{R[\frac{1}{p}]}\widehat{R}_{\mathfrak{q}}[\frac{1}{p}])[p^n]$ extends to the finite locally free group scheme $G_{n, \mathfrak{q}} \coloneqq G_{n, R}\times_{R, b_{\mathfrak{q}}}\widehat{R}_{\mathfrak{q}}$ over $\widehat{R}_{\mathfrak{q}}$ for each $n \geq 1$. By Proposition \ref{prop:4.1}, $G_{n, \mathfrak{q}}[p^m]$ is finite locally free over $\widehat{R}_{\mathfrak{q}}$ for each $m \geq 1$, and thus $\mathfrak{M}^*(G_{n, \mathfrak{q}}) = \mathfrak{M}_n\otimes_{\mathfrak{S}, b_{\mathfrak{q}}}\mathfrak{S}_{\mathfrak{q}}$ is projective over $\mathfrak{S}_{\mathfrak{q}}/(p^n)$ by Theorem \ref{thm:2.6}. Since this holds for each maximal ideal $\mathfrak{q}$ of $R$, we deduce that $\mathfrak{M}_n$ is projective over $\mathfrak{S}/(p^n)$ of rank $h$. In particular, $G_{n, R}[p^m]$ is finite locally free over $R$ for each $m \geq 1$. Note that $G_{n, R}[p^m]\times_R R[\frac{1}{p}] \cong (G_{n, R}\times_R R[\frac{1}{p}])[p^m] \cong G[p^m]$, and $G_{n, R}[p^m]$ has order $p^{mh}$ for each $1 \leq m \leq n$.  

By considering the orders of the groups, we see that the natural sequence of finite locally free group schemes 
\[
0 \rightarrow G_{n+1, R}[p] \rightarrow G_{n+1, R} \rightarrow G_{n+1, R}[p^n] \rightarrow 0,
\]
where the map $G_{n+1, R} \rightarrow G_{n+1, R}[p^n]$ is induced by multiplication by $p$, is short exact. Furthermore, it follows easily from the construction of the functor $\mathfrak{M}^*(\cdot)$ in \cite[Proof of Proposition 9.5]{kim-groupscheme-relative} using isogeny of $p$-divisible groups that $\mathfrak{M}^*(G_{n+1, R}[p]) \cong \mathfrak{M}_{n+1}/p\mathfrak{M}_{n+1}$ as torsion Kisin modules, where $\mathfrak{M}_{n+1}/p\mathfrak{M}_{n+1}$ is equipped with Frobenius and connection induced from $\mathfrak{M}_{n+1}$. Since $\mathfrak{M}^*(\cdot)$ is exact, we have $\mathfrak{M}^*(G_{n+1, R}[p^n]) \cong p\mathfrak{M}_{n+1}$ where $p\mathfrak{M}_{n+1}$ is equipped with Frobenius and connection induced from $\mathfrak{M}_{n+1}$. We claim that $\mathfrak{M}_n \cong p\mathfrak{M}_{n+1}$ as torsion quasi-Kisin modules with compatible Frobenius. Identify $p\mathfrak{M}_{n+1}\otimes_{\mathfrak{S}}\mathcal{O}_\mathcal{E} = M_n = \mathfrak{M}_n\otimes_{\mathfrak{S}}\mathcal{O}_\mathcal{E}$ as \'etale $(\varphi, \mathcal{O}_{\mathcal{E}})$-modules, and consider both $p\mathfrak{M}_{n+1}$ and $\mathfrak{M}_n$ as $\mathfrak{S}_n$-submodules of $M_n$. For the natural injective map $\mathfrak{M}_n \hookrightarrow \mathfrak{M}_n+p\mathfrak{M}_{n+1}$ of $\mathfrak{S}$-modules, consider the induced map $\mathfrak{M}_n\otimes_{\mathfrak{S}, b_{\mathfrak{q}}}\mathfrak{S}_{\mathfrak{q}} \rightarrow (\mathfrak{M}_n+p\mathfrak{M}_{n+1})\otimes_{\mathfrak{S}, b_{\mathfrak{q}}}\mathfrak{S}_{\mathfrak{q}}$ for each maximal ideal $\mathfrak{q}$ of $R$. Since $b_{\mathfrak{q}}: \mathfrak{S} \rightarrow \mathfrak{S}_{\mathfrak{q}}$ is flat, we have $(\mathfrak{M}_n+p\mathfrak{M}_{n+1})\otimes_{\mathfrak{S}, b_{\mathfrak{q}}}\mathfrak{S}_{\mathfrak{q}} = \mathfrak{M}_n\otimes_{\mathfrak{S}, b_{\mathfrak{q}}}\mathfrak{S}_{\mathfrak{q}}+p\mathfrak{M}_{n+1}\otimes_{\mathfrak{S}, b_{\mathfrak{q}}}\mathfrak{S}_{\mathfrak{q}}$, and by Proposition \ref{prop:4.1}, $\mathfrak{M}_n\otimes_{\mathfrak{S}, b_{\mathfrak{q}}}\mathfrak{S}_{\mathfrak{q}} = p\mathfrak{M}_{n+1}\otimes_{\mathfrak{S}, b_{\mathfrak{q}}}\mathfrak{S}_{\mathfrak{q}}$ as submodules of $M_n\otimes_{\mathfrak{S}, b_{\mathfrak{q}}}\mathfrak{S}_{\mathfrak{q}}$. Thus, $\mathfrak{M}_n\otimes_{\mathfrak{S}}\mathfrak{S}_{\mathfrak{q}} \stackrel{\cong}{\rightarrow} (\mathfrak{M}_n+p\mathfrak{M}_{n+1})\otimes_{\mathfrak{S}}\mathfrak{S}_{\mathfrak{q}}$ for each $\mathfrak{q}$, which implies that injective map $\mathfrak{M}_n \hookrightarrow \mathfrak{M}_n+p\mathfrak{M}_{n+1}$ is also surjective. Thus, $p\mathfrak{M}_{n+1} \subset \mathfrak{M}_n$, and similarly $\mathfrak{M}_n \subset p\mathfrak{M}_{n+1}$. This shows the claim $\mathfrak{M}_n = p\mathfrak{M}_{n+1}$ with compatible Frobenius. 

Thus, $\mathfrak{M} \coloneqq \varprojlim_{n} \mathfrak{M}_n$ with the induced Frobenius is a quasi-Kisin module of height $1$ over $\mathfrak{S}$. We now equip $\mathcal{M} \coloneqq \mathfrak{M}\otimes_{\mathfrak{S}, \varphi}R_0$ with a connection. Denote by $\nabla_{\mathfrak{M}_n}: \mathfrak{M}_n\otimes_{\mathfrak{S}, \varphi}R_0 \rightarrow (\mathfrak{M}_n\otimes_{\mathfrak{S}, \varphi}R_0)\otimes_{R_0}\widehat{\Omega}_{R_0}$ the connection for the torsion Kisin module $\mathfrak{M}_n$, and let $\mathcal{M}_n = \mathcal{M}\otimes_{R_0}R_0/(p^n)$. Consider the multiset 
\[
S_n = \{\nabla_{\mathfrak{M}_k}\otimes_{R_0}R_0/(p^n) ~|~ k \geq n+1 \}
\]
of connections on $\mathcal{M}_n$. Note that for each $k \geq n+1$, the connection $\nabla_{\mathfrak{M}_k}\otimes_{R_0}R_0/(p^n)$ satisfies the commutative diagram (\ref{eq:2.1}) in Section \ref{sec:2}. Using the result discussed at the end of Section \ref{sec:2}, we choose a compatible system of connections $\nabla_n$ on $\mathcal{M}_n$ inductively as follows. Identify $\widehat{\Omega}_{R_0} = \bigoplus_{i=1}^d R_0 \cdot d(\log{t_i})$. Let $\mathcal{M}^1 \subset \mathcal{M}$ be a direct factor lifting $\mathrm{Fil}^1 \mathcal{M}/p\mathcal{M} \subset \mathcal{M}/p\mathcal{M}$ as in Section \ref{sec:2}, and we fix a choice of a finite Zariski covering of $\mathrm{Spf}(R_0, p)$ over which $\mathcal{M}^1$ and $\mathcal{M}/\mathcal{M}^1$ are free, and fix a basis of $\mathcal{M}$ adapted to $\mathcal{M}^1$ after passing to the covering. For $n = 1$, $S_1$ is finite as a set of connections on $\mathcal{M}_1$, and we choose a connection $\nabla_1$ on $\mathcal{M}_1$ which has infinite multiplicity in the multiset $S_1$. When we are given a choice of connection $\nabla_n$ on $\mathcal{M}_n$, the elements in $S_{n+1}$ which lift $\nabla_n$ are contained in a finite set of connections, and we choose a connection $\nabla_{n+1}$ on $\mathcal{M}_{n+1}$ which has infinite multiplicity in $S_{n+1}$. Let $\nabla \coloneqq \varprojlim_{n} \nabla_n$ be the induced connection on $\mathcal{M}$. Then $\nabla$ is compatible with Frobenius, integrable, and topologically quasi-nilpotent. Hence, $(\mathfrak{M}, \nabla)$ is a Kisin module of height $1$, and the corresponding $p$-divisible group over $R$ extends $G$. The uniqueness of extending $G$ up to isomorphism follows from \cite[Theorem 4]{tate}.

On the other hand, assume $e \geq p$ and $R_0 = W(k)[\![s]\!]$. Let $U = \mathrm{Spec}R \backslash \{\mathfrak{m}\}$ be the open subscheme of $\mathrm{Spec}R$, where $\mathfrak{m}$ is the closed point given by the maximal ideal of $R$. By \cite[Theorem 28]{vasiu-zink-purity}, there exists a $p$-divisible group $G_U$ over $U$ which does not extend to a $p$-divisible group over $R$. By \cite[Chapter V. Lemma 6.2]{faltings-chai}, for each $n \geq 1$, the finite locally free group scheme $G_U[p^n]$ extends uniquely to a finite locally free group scheme over $R$ (if $A$ denotes the Hopf algebra for $G_U[p^n]\times_U R[\frac{1}{p}]$ and $B$ denotes the Hopf algebra for $G_U[p^n]\times_U R[\frac{1}{s}]$, then identifying $C \coloneqq A[\frac{1}{s}] = B[\frac{1}{p}]$ as the Hopf algebra for $G_U[p^n]\times_U R[\frac{1}{p}][\frac{1}{s}]$, the unique extension is given by $A \cap B$ with the induced Hopf algebra structure over $R$). Let $G = G_U \times_U R[\frac{1}{p}]$ be the $p$-divisible group over $R[\frac{1}{p}]$, and suppose $G$ extends to a $p$-divisible group $G_R$ over $R$. Since $G_U \times_U (R[\frac{1}{s}])[\frac{1}{p}] = G_R \times_R (R[\frac{1}{s}])[\frac{1}{p}]$, we have by \cite[Theorem 4]{tate} that $G_U \times_U R[\frac{1}{s}] = G_R \times_R R[\frac{1}{s}]$. Thus, $G_R\times_R U = G_U$, which contradicts to that $G_U$ does not extend over $R$. This shows that $G$ cannot be extended to a $p$-divisible group over $R$. 
\end{proof}

\section{Barsotti-Tate Deformation Ring for Relative Base of Dimension 2} \label{sec:5}

Throughout this section, we assume that the Krull dimension of $R$ is equal to $2$. For a finite $\mathbf{Q}_p$-representation $V$ of $\mathcal{G}_R$, we say it is \textit{Barsotti-Tate} if there exists a $p$-divisible group $G_R$ over $R$ such that $V = T_p(G_R)\otimes_{\mathbf{Z}_p}\mathbf{Q}_p$ as $\mathcal{G}_R$-representations.

\begin{prop} \label{prop:5.1}
Assume $e< p-1$. Let $T$ be a finite free $\mathbf{Z}_p$-representation of $\mathcal{G}_R$ such that $T[\frac{1}{p}]$ is Barsotti-Tate. Then there exists a $p$-divisible group $G_R$ over $R$ such that $T = T_p(G_R)$.	
\end{prop}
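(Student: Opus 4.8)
The plan is to verify the hypotheses of Theorem \ref{thm:4.2} for the $p$-divisible group attached to $T$ and then invoke it. Since every $p$-divisible group over $R[\frac{1}{p}]$ is \'etale, the category of such groups is equivalent to that of finite free $\mathbf{Z}_p$-representations of $\mathcal{G}_R$, so $T$ corresponds to a $p$-divisible group $G$ over $R[\frac{1}{p}]$ with $T_p(G) = T$. By Theorem \ref{thm:4.2} it then suffices to show that $G[p^n]$ extends to a finite locally free group scheme over $R$ for every $n$; the resulting extension $G_R$ will satisfy $T_p(G_R) = T_p(G) = T$ by the remark at the end of Section \ref{sec:3}.

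To produce these extensions I would exploit the Barsotti--Tate hypothesis. By assumption there is a $p$-divisible group $G_R'$ over $R$ with $T_p(G_R')\otimes_{\mathbf{Z}_p}\mathbf{Q}_p \cong T\otimes_{\mathbf{Z}_p}\mathbf{Q}_p$; set $G' = G_R'\times_R R[\frac{1}{p}]$ and $T' = T_p(G_R') = T_p(G')$. Fixing such an isomorphism realizes $T$ and $T'$ as two $\mathcal{G}_R$-stable lattices in one rational representation $V$. Since multiplication by $p$ is an isomorphism $T \cong pT$ of $\mathcal{G}_R$-representations, replacing $T$ by a power $p^mT$ alters neither $G$ nor the finite group schemes $G[p^n]$ up to isomorphism, nor the desired conclusion up to isomorphism of representations; I use this freedom to arrange $T \subseteq T'$, and then pick $N$ with $p^NT' \subseteq T$. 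The lattice change is encoded by the finite $\mathcal{G}_R$-module $C \coloneqq T'/T$, an \'etale subgroup scheme of $G'[p^N] = G_R'[p^N]\times_R R[\frac{1}{p}]$, and since $T \subseteq T'$ is an isogeny $G \to G'$ with kernel $C$, the whole of $G$ is determined by $G'$ and $C$; thus extending all the $G[p^n]$ reduces to a single finite-level extension problem over $R$.

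The hardest step is to construct this model over all of $R$. At the level of \'etale $\varphi$-modules the inclusion $T \subseteq T'$ corresponds to an inclusion $M' \subseteq M$ of projective \'etale $(\varphi,\mathcal{O}_\mathcal{E})$-modules with $T^\vee(M) = T$ and $T^\vee(M') = T'$, where $M' = \mathfrak{M}^*(G_R')\otimes_{\mathfrak{S}}\mathcal{O}_\mathcal{E}$; writing $\mathfrak{M}' = \mathfrak{M}^*(G_R')$, I would produce a $\varphi$- and $\nabla$-stable Kisin lattice $\mathfrak{M}\supseteq\mathfrak{M}'$, finite projective of height $1$, with $\mathfrak{M}\otimes_{\mathfrak{S}}\mathcal{O}_\mathcal{E} = M$. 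By the anti-equivalence of Theorem \ref{thm:2.2} such an $\mathfrak{M}$ is $\mathfrak{M}^*(G_R)$ for a $p$-divisible group $G_R$ over $R$, and then $T_p(G_R) = T^\vee(M) = T$. Geometrically $\mathfrak{M}$ encodes the extension over $R$ of the finite flat subgroup scheme $C$, realized as its scheme-theoretic closure in $G_R'[p^N]$, so the crux is that this closure is finite locally free, equivalently that the candidate lattice $\mathfrak{M}$ is a genuine Kisin module. To verify this I would base change along $R \to \widehat{R}_{\mathfrak{q}}$ for $\mathfrak{q}\in\mathrm{mSpec}\,R$, where $\widehat{R}_{\mathfrak{q}} \cong W(k_{\mathfrak{q}})[\![s_1,\dots,s_d]\!]\otimes_{W(k)}\mathcal{O}_K$ and these maps are jointly faithfully flat and compatible with scheme-theoretic closure. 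Over each completion one is in the formal power series situation of Theorem \ref{thm:2.6}, and the assumption $e < p-1$ supplies the uniqueness of finite flat group scheme models over the mixed-characteristic discrete valuation rings obtained by specializing $s_1,\dots,s_d\mapsto 0$. Running the Fitting-ideal and Weierstrass-preparation argument of the proof of Theorem \ref{thm:4.2} then shows the base-changed lattice is finite projective of the expected rank, whence $\mathfrak{M}$ is finite projective over $\mathfrak{S}$, with $\nabla$ inherited and topologically quasi-nilpotent since it is so after these base changes.

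Granting this, $G_R$ exists with $T_p(G_R) = T$; in particular every $G[p^n] = G_R[p^n]\times_R R[\frac{1}{p}]$ extends over $R$, so Theorem \ref{thm:4.2} applies and also yields uniqueness of the extension. The main obstacle is precisely this integrality over the whole of $\mathrm{Spec}\,R$ rather than only at its codimension-one or closed points: this is exactly the extension phenomenon that fails for $e \geq p$ (cf. the second half of Theorem \ref{thm:4.2} and \cite{vasiu-zink-purity}), so the bound $e < p-1$ must be used decisively, through the uniqueness of finite flat models at the completions $\widehat{R}_{\mathfrak{q}}$, to know that the naive closure yields a finite locally free group scheme.
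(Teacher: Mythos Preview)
Your strategy coincides with the paper's: use the Barsotti--Tate hypothesis to produce $G'_R$ over $R$ isogenous to $G$, take the kernel of the isogeny as a finite group scheme over $R[\frac{1}{p}]$ sitting inside some $G'[p^N]$, form its scheme-theoretic closure inside $G'_R[p^N]$, and verify flatness of this closure by passing to the completions $\widehat{R}_{\mathfrak{q}}$. Once the closure $H_R$ is flat, the paper simply sets $G_R=G'_R/H_R$ and is done; your opening reduction to Theorem~\ref{thm:4.2} is then superfluous, as you effectively acknowledge in your final paragraph.

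The substantive gap is at the flatness step. You assert that over each $\widehat{R}_{\mathfrak{q}}$ one can ``run the Fitting-ideal and Weierstrass-preparation argument of the proof of Theorem~\ref{thm:4.2}'' to see the candidate $\mathfrak{M}$ is finite projective, but that argument is not available here: in Theorem~\ref{thm:4.2} the torsion Kisin modules $\mathfrak{M}_n=\mathfrak{M}^*(G_{n,R})$ exist from the outset because the $G_{n,R}$ are \emph{assumed} finite locally free, and the Fitting ideals being computed are those of genuine $\mathfrak{S}/p^n$-modules. In your situation there is no torsion Kisin module to hand---its very existence is equivalent to the flatness you are trying to prove---and your ``candidate lattice $\mathfrak{M}$'' is never actually defined as an $\mathfrak{S}$-module. (Nor does scheme-theoretic closure commute with the non-flat specialization $s_i\mapsto 0$, so one cannot simply read off the Fitting ideals of the closure from the DVR case.) The paper handles this step by purely geometric means: over $U_{\mathfrak{q}}=\mathrm{Spec}\,\widehat{R}_{\mathfrak{q}}\setminus\{\mathfrak{m}\}$ the closure is automatically flat since the local rings there are DVRs or fields; \cite[Lemma~6.2]{faltings-chai} then extends this finite locally free group scheme on $U_{\mathfrak{q}}$ uniquely across the closed point, and \cite[Corollary~19]{vasiu-zink-purity}, which is where $e<p-1$ enters, identifies that extension with the base-changed closure. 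These two citations are the content your argument is missing.

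A minor correction on directions: with $T\subseteq T'$ the induced isogeny is $G\to G'$ and its kernel lies in $G$, not in $G'[p^N]$. To take a closure inside $G'_R[p^N]$ you need the isogeny $G'\to G$, i.e.\ after rescaling you should arrange $T'\subseteq T$ rather than $T\subseteq T'$.
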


\begin{proof}
Since $T[\frac{1}{p}]$ is Barsotti-Tate, there exists a $p$-divisible group $G'_R$ over $R$ such that $T_p(G'_R)[\frac{1}{p}] = T[\frac{1}{p}]$. Denote $T' = T_p(G'_R), ~G' = G'_R\times_R R[\frac{1}{p}]$, and let $G$ be the $p$-divisible group over $R[\frac{1}{p}]$ corresponding to the representation $T$. 

Since $p^nT \subset T'$ and $p^nT' \subset T$ for some positiver integer $n$, we have an isogeny $f: G' \rightarrow G$. Let $H \coloneqq \mathrm{ker}(f)$, which is a finite locally free group scheme over $R[\frac{1}{p}]$. Then we have a closed immersion $h: H \hookrightarrow G'[p^m]$ for some positive integer $m$. Note that $G'[p^m]$ extends to the finite locally free group scheme $G'_R[p^m]$ over $R$.

Let $H_R$ be the scheme theoretic closure of $H$ over $R$ obtained from $h$ and $G'_R[p^m]$, given similarly as in \cite[Section 2.1]{raynaud-groupscheme}. By the construction of the scheme theoretic closure, $H_R$ is a finite group scheme. We claim that it is locally free over $R$. For that, let $\mathfrak{q}$ be a maximal ideal of $R$ and let $\mathfrak{q}_0 = \mathfrak{q} \cap R_0$, and consider the base change map $b_{\mathfrak{q}}: R \rightarrow \widehat{R}_{\mathfrak{q}}$ as in the proof of Theorem \ref{thm:4.2}. Since $R$ has Krull dimension $2$, we have $\widehat{R}_{\mathfrak{q}} \cong \mathcal{O}_{\mathfrak{q_0}}[\![s]\!]\otimes_{W(k)}\mathcal{O}_K$ for some Cohen ring $\mathcal{O}_{\mathfrak{q_0}}$ with the maximal ideal $(p)$. Let $U_{\mathfrak{q}} \subset \mathrm{Spec}\widehat{R}_{\mathfrak{q}}$ be the closed subscheme obtained by deleting the closed point given by $\mathfrak{q}$. Since $U_{\mathfrak{q}}$ is a Dedekind scheme, $(H_R \times_{R} \widehat{R}_{\mathfrak{q}})\otimes_{\widehat{R}_{\mathfrak{q}}} U_{\mathfrak{q}}$ is locally free over $U_{\mathfrak{q}}$ as the corresponding sheaf of Hopf algebras is torsion free. It extends uniquely to a finite locally free group scheme $H_{\mathfrak{q}}$ over $\widehat{R}_{\mathfrak{q}}$ by \cite[Chapter V. Lemma 6.2]{faltings-chai}. On the other hand, since $e < p-1$, note that $p \notin (\mathfrak{q}\widehat{R}_{\mathfrak{q}})^{p-1}$. Since $h$ is a monomorphism, we deduce from \cite[Proposition 15]{vasiu-zink-purity} applied for $\widehat{R}_{\mathfrak{q}}$ that the map $H_{\mathfrak{q}} \rightarrow G'_R[p^m]\times_R \widehat{R}_{\mathfrak{q}}$ of finite flat group schemes is a monomorphism and hence a closed immersion. Thus, $H_R \times_R \widehat{R}_{\mathfrak{q}} = H_{\mathfrak{q}}$. Since this holds for every maximal ideal $\mathfrak{q}$ of $R$, $H_R$ is locally free over $R$. 

The map $h$ induces a closed immersion $H_R \hookrightarrow G'_R[p^m]$, and $G_R \coloneqq G'_R/H_R$ is a $p$-divisible group over $R$. It is clear from the construction that $T_p(G_R) = T$ as $\mathbf{Z}_p[\mathcal{G}_R]$-modules.  
\end{proof}

For a finite free $\mathbf{Z}_p$-representation $T$ of $\mathcal{G}_R$, it makes sense by Proposition \ref{prop:5.1} to say that $T$ is \textit{Barsotti-Tate} if there exists a $p$-divisible group $G_R$ over $R$ such that $T = T_p(G_R)$.  

\begin{lem} \label{lem:5.2}
Assume $e < p-1$. Let $H_R$ be a $p$-power order finite locally free group scheme over $R$, and let $T = H_R(\overline{R})$ be the corresponding torsion $\mathbf{Z}_p$-representation of $\mathcal{G}_R$. If we have a short exact sequence of $\mathbf{Z}_p[\mathcal{G}_R]$-modules
\[
0 \rightarrow T_1 \rightarrow T \rightarrow T_2 \rightarrow 0,
\]	
then there exist $p$-power order finite locally free group schemes $H_{1, R}$ and $H_{2, R}$ over $R$ such that $T_i = H_{i, R}(\overline{R})$ for $i = 1, 2$ as $\mathcal{G}_R$-representations.
\end{lem}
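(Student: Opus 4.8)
The plan is to reduce everything to a construction with finite locally free group schemes: pass to generic fibers to identify sub- and quotient group schemes, produce $H_{1,R}$ by scheme-theoretic closure and $H_{2,R}$ as an fppf quotient, and prove finite local freeness by the same completion argument used in Proposition \ref{prop:5.1}.

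First I would work on the generic fiber. Since every $p$-power-order finite locally free group scheme over $R[\frac{1}{p}]$ is \'etale, the functor of points gives a covariant equivalence between such group schemes over $R[\frac{1}{p}]$ and finite $\mathbf{Z}_p[\mathcal{G}_R]$-modules. Writing $H \coloneqq H_R\times_R R[\frac{1}{p}]$, one has $H(\overline{R}[\tfrac{1}{p}]) = H_R(\overline{R}) = T$, because $\mathcal{O}(H_R)$ is finite over $R$ and $\overline{R}$ is the integral closure of $R$ in $\overline{R}[\frac{1}{p}]$. Under this equivalence the submodule $T_1 \subset T$ corresponds to a closed subgroup scheme $H_1 \subseteq H$, and the quotient $T_2$ to the quotient group scheme $H_2 \coloneqq H/H_1$, both over $R[\frac{1}{p}]$.

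Next I would let $H_{1,R}$ be the scheme-theoretic closure of $H_1$ inside $H_R$, as in \cite[Section 2]{raynaud-groupscheme}; it is a closed subgroup scheme, finite over $R$, and $R$-torsion-free since its affine algebra injects into the $R[\frac{1}{p}]$-module $\mathcal{O}(H_1)$. The real content is that $H_{1,R}$ is finite \emph{locally free}: over a base of Krull dimension $2$, $R$-torsion-freeness does not force flatness. To prove flatness I would argue exactly as in Proposition \ref{prop:5.1}, checking it after completing at each maximal ideal $\mathfrak{q}$, where $\widehat{R}_{\mathfrak{q}} \cong W(k_{\mathfrak{q}})[\![s_1]\!]\otimes_{W(k)}\mathcal{O}_K$ is regular of dimension $2$; since completion is flat, $H_{1,R}\times_R \widehat{R}_{\mathfrak{q}}$ is again the closure of $H_1$ over $\widehat{R}_{\mathfrak{q}}$. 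On the punctured spectrum $U_{\mathfrak{q}}$ the local rings at height-one primes are discrete valuation rings, over which finite torsion-free modules are free, so $H_{1,R}\times_R U_{\mathfrak{q}}$ is already finite locally free. By the purity statement \cite[Lemma 6.2]{faltings-chai} it extends uniquely to a finite locally free group scheme $H_{\mathfrak{q}}$ over $\widehat{R}_{\mathfrak{q}}$, and since $e < p-1$, \cite[Corollary 19]{vasiu-zink-purity} identifies this extension with the closure, giving $H_{1,R}\times_R \widehat{R}_{\mathfrak{q}} = H_{\mathfrak{q}}$. As $\mathfrak{q}$ was arbitrary, $H_{1,R}$ is finite locally free over $R$. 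This step, and in particular the use of Vasiu--Zink purity to identify the closure with the canonical extension across the codimension-two point, is where $e < p-1$ is essential and is the main obstacle.

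Finally, with $H_{1,R}$ a finite locally free closed subgroup scheme of $H_R$, I would set $H_{2,R} \coloneqq H_R/H_{1,R}$, the fppf quotient, which by the standard theory of quotients by finite locally free subgroup schemes over a Noetherian base is again a finite locally free group scheme over $R$ of $p$-power order, with generic fiber $H_2$. Taking $\overline{R}$-points and using $H_{i,R}(\overline{R}) = H_i(\overline{R}[\tfrac{1}{p}])$ as before, the short exact sequence $0 \to H_1(\overline{R}[\tfrac{1}{p}]) \to H(\overline{R}[\tfrac{1}{p}]) \to H_2(\overline{R}[\tfrac{1}{p}]) \to 0$ recovers $0 \to T_1 \to T \to T_2 \to 0$, so that $T_i = H_{i,R}(\overline{R})$ for $i = 1, 2$, as required.
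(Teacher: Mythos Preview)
Your proposal is correct and follows essentially the same route as the paper: pass to the generic fiber, take $H_{1,R}$ as the scheme-theoretic closure of $H_1$ in $H_R$, verify finite local freeness by completing at each maximal ideal and invoking \cite[Lemma 6.2]{faltings-chai} together with \cite[Corollary 19]{vasiu-zink-purity} exactly as in Proposition~\ref{prop:5.1}, and then set $H_{2,R}=H_R/H_{1,R}$. The paper's proof is terser (it simply cites ``the same argument as in the proof of Proposition~\ref{prop:5.1}''), but the content is the same; your added explanation of why $H_{1,R}\times_R U_{\mathfrak{q}}$ is already locally free over the punctured spectrum is a helpful elaboration rather than a different idea.
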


\begin{proof}
Let $H \coloneqq H_R\times_R R[\frac{1}{p}]$. Let $H_i$ for $i = 1, 2$ be finite locally free group schemes over $R[\frac{1}{p}]$ such that $H_i(\overline{R}[\frac{1}{p}]) = T_i$ as $\mathcal{G}_R$-representations. The given exact sequence of $\mathcal{G}_R$-representations induce the short exact sequence
\[
0 \rightarrow H_1 \rightarrow H \rightarrow H_2 \rightarrow 0
\]
of finite locally free group schemes. Let $H_{1, R}$ be the scheme theoretic closure of $H_1$ over $R$ obtained from the closed embedding $H_1 \hookrightarrow H$ and $H_R$. By the same argument as in the proof of Proposition \ref{prop:5.1}, $H_{1, R}$ is a finite locally free group scheme over $R$ extending $H_1$. Furthermore, $H_{2, R} \coloneqq H_R/H_{1, R}$ is a finite locally free group scheme over $R$ extending $H_2$ (cf. \cite{raynaud-quotgp}). It is clear that $T_i = H_{i, R}(\overline{R})$ for $i = 1, 2$.  
\end{proof}

\begin{cor} \label{cor:5.3}
Assume $e< p-1$. Let $A_1 \hookrightarrow A_2$ be an injective map of finite free $\mathbf{Z}_p$-algebras. Let $T_{A_1}$ be a finite free $A_1$-module given the $p$-adic topology and equipped with a continuous $A_1$-linear $\mathcal{G}_R$-action. Let $T_{A_2} \coloneqq T_{A_1} \otimes_{A_1} A_2$ be the induced representation with the $A_2$-linear $\mathcal{G}_R$-action. Then $T_{A_1}$ is Barsotti-Tate if and only if $T_{A_2}$ is Barsotti-Tate.
\end{cor}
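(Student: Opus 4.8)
The plan is to reduce the statement to the mod-$p^n$ criterion provided by Theorem \ref{thm:4.2}, exploiting the fact that, for finite free $\mathbf{Z}_p$-representations, being Barsotti--Tate depends only on the isogeny class. First note that, since $A_1, A_2$ are finite free over $\mathbf{Z}_p$ and $T_{A_1}$ is finite free over $A_1$, both $T_{A_1}$ and $T_{A_2} \cong T_{A_1}\otimes_{A_1}A_2$ are finite free $\mathbf{Z}_p$-representations of $\mathcal{G}_R$, so the notion of being Barsotti--Tate applies to each via Proposition \ref{prop:5.1}. The key reformulation I would record is the criterion $(\star)$: a finite free $\mathbf{Z}_p$-representation $T$ is Barsotti--Tate if and only if for every $n \geq 1$ the torsion representation $T/p^nT$ is isomorphic to $H_{n,R}(\overline{R})$ for some $p$-power order finite locally free group scheme $H_{n,R}$ over $R$. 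Indeed, letting $G$ be the \'etale $p$-divisible group over $R[\frac{1}{p}]$ with $T_p(G)=T$, one has $T/p^nT \cong G[p^n](\overline{R}[\frac{1}{p}])$; a $p$-power order finite locally free group scheme over $R[\frac{1}{p}]$ is \'etale since $p$ is invertible there, and $H_{n,R}(\overline{R})=H_{n,R}(\overline{R}[\frac{1}{p}])$ because $H_{n,R}$ is module-finite over $R$, so $G[p^n]$ extends over $R$ exactly when $T/p^nT$ has the indicated form; Theorem \ref{thm:4.2} then equates the extendability of all the $G[p^n]$ with the extension of $G$ to a $p$-divisible group over $R$, i.e.\ with $T$ being Barsotti--Tate. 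I will call a torsion representation \emph{good} if it is of the form $H_R(\overline{R})$; good representations are closed under finite direct sums (take products of the group schemes) and, by Lemma \ref{lem:5.2}, under subrepresentations and quotients.

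For the implication that $T_{A_1}$ Barsotti--Tate forces $T_{A_2}$ Barsotti--Tate, I would fix $n$ and choose a surjection $A_1^{\oplus r}\twoheadrightarrow A_2$ of $A_1$-modules, which exists since $A_2$ is finitely generated over $A_1$. Applying the right-exact functor $(T_{A_1}/p^nT_{A_1})\otimes_{A_1}(-)$ yields a $\mathcal{G}_R$-equivariant surjection $(T_{A_1}/p^nT_{A_1})^{\oplus r}\twoheadrightarrow (T_{A_1}/p^nT_{A_1})\otimes_{A_1}A_2 = T_{A_2}/p^nT_{A_2}$. By $(\star)$ the source is good, being a finite direct sum of copies of the good representation $T_{A_1}/p^nT_{A_1}$, so by Lemma \ref{lem:5.2} the quotient $T_{A_2}/p^nT_{A_2}$ is good for every $n$; invoking $(\star)$ again gives that $T_{A_2}$ is Barsotti--Tate.

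For the converse I would use that $V_1 := T_{A_1}[\frac{1}{p}]$ sits as a $\mathcal{G}_R$-subrepresentation of $V_2 := T_{A_2}[\frac{1}{p}]$ through $A_1[\frac{1}{p}]\hookrightarrow A_2[\frac{1}{p}]$. The hard part is that reducing the inclusion $T_{A_1}\hookrightarrow T_{A_2}$ modulo $p^n$ need not remain injective, since the cokernel can have $p$-torsion. The remedy is to replace $T_{A_1}$ by the saturated lattice $T'_{A_1} := T_{A_2}\cap V_1$ (intersection inside $V_2$), a $\mathcal{G}_R$-stable $\mathbf{Z}_p$-lattice in $V_1$; by Proposition \ref{prop:5.1} it suffices to prove $T'_{A_1}$ is Barsotti--Tate, as it spans the same $V_1$ as $T_{A_1}$. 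Now $T_{A_2}/T'_{A_1}$ injects into the $\mathbf{Q}_p$-vector space $V_2/V_1$, hence is $\mathbf{Z}_p$-free, so reduction modulo $p^n$ gives an injection $T'_{A_1}/p^nT'_{A_1}\hookrightarrow T_{A_2}/p^nT_{A_2}$. Granting $T_{A_2}$ Barsotti--Tate, the target is good by $(\star)$, whence so is the subrepresentation $T'_{A_1}/p^nT'_{A_1}$ by Lemma \ref{lem:5.2}; thus $T'_{A_1}$ is Barsotti--Tate by $(\star)$, and therefore so is $T_{A_1}$.

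The only genuinely delicate points are the two highlighted above: the verification of $(\star)$, where one must carefully match $\overline{R}$-points with $\overline{R}[\frac{1}{p}]$-points of the finite group schemes before applying Theorem \ref{thm:4.2}; and the failure of exactness of reduction mod $p^n$ in the converse, circumvented by the saturation trick together with the isogeny-invariance of the Barsotti--Tate property recorded in Proposition \ref{prop:5.1}. Everything else is formal manipulation with the closure properties of good torsion representations.
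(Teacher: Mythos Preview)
Your proof is correct and follows essentially the same strategy as the paper's. The forward direction is identical: a surjection $A_1^{\oplus r}\twoheadrightarrow A_2$ induces surjections on $p^n$-torsion quotients, and Lemma \ref{lem:5.2} together with Theorem \ref{thm:4.2} finishes. For the converse, your saturated lattice $T'_{A_1} = T_{A_2}\cap V_1$ is precisely the paper's $T := \ker\bigl(T_{A_2}\to T_{A_2}\otimes_{A_2}B_3\bigr)$ with $B_3 = A_2[\tfrac{1}{p}]/A_1[\tfrac{1}{p}]$, since that map factors as $T_{A_2}\hookrightarrow V_2\twoheadrightarrow V_2/V_1$; both arguments then use freeness of $T_{A_2}/T'_{A_1}$ to get injectivity mod $p^n$, apply Lemma \ref{lem:5.2}, and conclude via Proposition \ref{prop:5.1}. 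Your explicit isolation of the criterion $(\star)$ is a helpful organizational device, but the underlying logic is the same.
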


\begin{proof}
Let $G_2$ be the $p$-divisible group over $R[\frac{1}{p}]$ corresponding to $T_{A_2}$. Suppose first that $T_{A_1}$ is Barsotti-Tate. Note that there exist finitely many elements $x_1, \ldots, x_m \in A_2$ generating $A_2$ as an $A_1$-module. We have a surjective map of $\mathbf{Z}_p[\mathcal{G}_R]$-modules $T_{A_1}^m \twoheadrightarrow T_{A_2}$ sending the canonical basis elements $e_i$ of $T_{A_1}^m$ to $x_i$. Note that the direct sum representation $T_{A_1}^m$ is Barsotti-Tate. For each integer $n \geq 1$, $T_{A_2}/p^n$ is therefore a quotient of $T_{A_1}^m/p^n$, and by Lemma \ref{lem:5.2}, $G_2[p^n]$ extends to a finite locally free group scheme over $R$. Thus, $T_{A_2}$ is Barsotti-Tate by Theorem \ref{thm:4.2}.

Conversely, suppose $T_{A_2}$ is Barsotti-Tate. Let $B_3$ be the quotient of the induced injection $A_1[\frac{1}{p}] \hookrightarrow A_2[\frac{1}{p}]$ of $\mathbf{Q}_p$-algebras, and let $T \subset T_{A_2}$ be the kernel of the induced map of representations $T_{A_2} \rightarrow T_{A_2}\otimes_{A_2} B_3$. Then for each integer $n \geq 1$, the map $T/p^n \rightarrow T_{A_2}/p^n$ is injective. Hence, by Lemma \ref{lem:5.2} and Theorem \ref{thm:4.2} similarly as above, $T$ is Barsotti-Tate. Since $T[\frac{1}{p}] = T_{A_1}[\frac{1}{p}]$, $T_{A_1}$ is Barsotti-Tate by Proposition \ref{prop:5.1}. 
\end{proof}

We now study the geometry of the locus of Barsotti-Tate representations. Denote by $\mathcal{C}$ the category of topological local $\mathbf{Z}_p$-algebras $A$ satisfying the following conditions:
\begin{itemize}
\item the natural map $\mathbf{Z}_p \rightarrow A/\mathfrak{m}_A$ is surjective, where $\mathfrak{m}_A$ denotes the maximal ideal of $A$;
\item the map from $A$ to the projective limit of its discrete artinian quotients is a topological isomorphism.	
\end{itemize}

\noindent Note that the first condition implies that the residue field of $A$ is $\mathbf{F}_p$. The second condition is equivalent to the condition that $A$ is complete and its topology can be given by a collection of open ideals $\mathfrak{a} \subset A$ for which $A/\mathfrak{a}$ is aritinian. Morphisms in $\mathcal{C}$ are continuous $\mathbf{Z}_p$-algebra morphisms. The following proposition is shown in \cite{smit}.

\begin{prop} \label{prop:5.4} \emph{(cf. \cite[Proposition 2.4]{smit})}
Suppose $A$ is a Noetherian ring in $\mathcal{C}$. Then the topology on $A$ is equal to the $\mathfrak{m}_A$-adic topology. 
\end{prop}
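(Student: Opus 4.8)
The plan is to name the given topology $\tau$ and compare it with the $\mathfrak{m}_A$-adic topology $\tau_{\mathfrak{m}}$ by proving each refines the other. One inclusion is elementary: every $\tau$-open ideal $\mathfrak{a}$ has $A/\mathfrak{a}$ artinian, and since $A$ is local this quotient is artinian local, so its maximal ideal $\mathfrak{m}_A/\mathfrak{a}$ is nilpotent and $\mathfrak{m}_A^{s} \subseteq \mathfrak{a}$ for some $s$. Hence every $\tau$-open ideal contains a power of $\mathfrak{m}_A$, which is exactly the statement that $\tau_{\mathfrak{m}}$ is finer than $\tau$. I would also record two facts for later use: first, since $\tau$ is Hausdorff there is a proper open ideal, and any proper ideal of a local ring lies in $\mathfrak{m}_A$, so $\mathfrak{m}_A$ itself is $\tau$-open; second, because the residue field is $\mathbf{F}_p$, every finite-length quotient of $A$ is actually a finite ring, so $A = \varprojlim_{\mathfrak{a}} A/\mathfrak{a}$ is a profinite (hence compact) topological ring under $\tau$.

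The substance is the reverse inclusion: I must show each $\mathfrak{m}_A^n$ is $\tau$-open. The natural route is a compactness argument in the $\mathfrak{m}_A$-adic topology, but this first requires knowing that $A$ is $\mathfrak{m}_A$-adically complete, whereas the hypothesis only supplies $\tau$-completeness. To bridge the gap I would construct an explicit ring homomorphism $\pi \colon \widehat{A} \to A$ out of the $\mathfrak{m}_A$-adic completion $\widehat{A} = \varprojlim_n A/\mathfrak{m}_A^n$: given a compatible system $(\bar{b}_n)_n$, for each open ideal $\mathfrak{a}$ choose $n$ with $\mathfrak{m}_A^n \subseteq \mathfrak{a}$ and send the system to the image of $b_n$ in $A/\mathfrak{a}$; this is well defined and compatible, landing in $\varprojlim_{\mathfrak{a}} A/\mathfrak{a} = A$. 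One checks $\pi \circ \iota = \mathrm{id}_A$ for the canonical $\iota \colon A \to \widehat{A}$, so $\widehat{A} \cong \iota(A) \oplus \ker\pi$ as $A$-modules. Tensoring with $A/\mathfrak{m}_A^k$ and using $\widehat{A}/\mathfrak{m}_A^k\widehat{A} \cong A/\mathfrak{m}_A^k$ forces $\ker\pi = \mathfrak{m}_A^k\ker\pi$ for every $k$, whence $\ker\pi \subseteq \bigcap_k \mathfrak{m}_A^k\widehat{A} = 0$ by Krull's intersection theorem on the Noetherian local ring $\widehat{A}$. Thus $\iota$ is an isomorphism and $A$ is $\mathfrak{m}_A$-adically complete; in particular $(A, \tau_{\mathfrak{m}}) = \varprojlim_n A/\mathfrak{m}_A^n$ is also profinite and compact, since each $A/\mathfrak{m}_A^n$ is finite.

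With both ingredients in place the conclusion is clean. Fix $n$. Each $\tau$-open ideal $\mathfrak{a}$ is $\tau_{\mathfrak{m}}$-open because $\tau_{\mathfrak{m}}$ is finer, so $A\setminus\mathfrak{a}$ is $\tau_{\mathfrak{m}}$-open, and $\bigcup_{\mathfrak{a}}(A\setminus\mathfrak{a}) = A \setminus \bigcap_{\mathfrak{a}}\mathfrak{a} = A\setminus\{0\}$ by Hausdorffness of $\tau$. Since $\mathfrak{m}_A^n$ is clopen in the profinite ring $(A,\tau_{\mathfrak{m}})$, the set $A\setminus\mathfrak{m}_A^n$ is $\tau_{\mathfrak{m}}$-compact, so the open cover $\{A\setminus\mathfrak{a}\}$ admits a finite subcover indexed by $\mathfrak{a}_1,\dots,\mathfrak{a}_k$. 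Then $\bigcap_{i=1}^k\mathfrak{a}_i \subseteq \mathfrak{m}_A^n$, and the left-hand side is a $\tau$-open ideal; hence $\mathfrak{m}_A^n$ contains a $\tau$-open ideal and is itself $\tau$-open. This shows $\tau_{\mathfrak{m}}$ is coarser than $\tau$, and combined with the first paragraph gives $\tau = \tau_{\mathfrak{m}}$.

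The main obstacle is precisely the reverse inclusion, and inside it the step upgrading $\tau$-completeness to $\mathfrak{m}_A$-adic completeness: the hypotheses only give completeness for the possibly coarser topology $\tau$, and the elementary nilpotency argument of the first paragraph runs in the wrong direction. Everything hinges on using the Noetherian hypothesis (through Krull's intersection theorem) to produce the splitting $\pi$ and kill its kernel; once that is done, the compactness coming from the finite residue field reduces the remaining work to a routine finite-subcover argument. I expect the only genuinely fiddly points to be the verification that $\pi$ is well defined and multiplicative, and the Nakayama-type bookkeeping in the kernel computation.
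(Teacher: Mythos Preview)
The paper does not prove this proposition; it merely records it as a citation to \cite[Proposition 2.4]{smit}. So there is no ``paper's own proof'' to compare against, and your task was really to supply a proof from scratch.

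Your argument is correct. The first paragraph (every $\tau$-open ideal contains some $\mathfrak{m}_A^s$, hence $\tau_{\mathfrak{m}}$ refines $\tau$) is the easy direction and is fine. The core step, upgrading $\tau$-completeness to $\mathfrak{m}_A$-adic completeness via the retraction $\pi\colon\widehat{A}\to A$ and Krull's intersection theorem, is exactly where the Noetherian hypothesis is used and is carried out correctly; note that $\widehat{A}$ is Noetherian local because $A$ is, so Krull applies. The compactness/finite-subcover argument in the last paragraph is also sound.

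One cosmetic simplification: once you know $(A,\tau_{\mathfrak{m}})$ is compact, $(A,\tau)$ is Hausdorff, and the identity map $(A,\tau_{\mathfrak{m}})\to(A,\tau)$ is continuous (since $\tau_{\mathfrak{m}}$ refines $\tau$), you can conclude immediately that it is a homeomorphism, bypassing the explicit cover argument. This is of course the same content, just packaged more cleanly.
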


For $A \in \mathcal{C}$, we mean by an $A$-\textit{representation of} $\mathcal{G}_R$ a finite free $A$-module equipped with a continuous $A$-linear $\mathcal{G}_R$-action. We fix an $\mathbf{F}_p$-representation $V_0$ of $\mathcal{G}_R$ which is absolutely irreducible. For $A \in \mathcal{C}$, a \textit{deformation} of $V_0$ in $A$ is an isomorphism class of $A$-representations of $V$ of $\mathcal{G}_R$ satisfying $V\otimes_A \mathbf{F}_p \cong V_0$ as $\mathbf{F}_p[\mathcal{G}_R]$-modules. We denote by $\mathrm{Def}(V_0, A)$ the set of such deformations. A morphism $f: A \rightarrow A'$ in $\mathcal{C}$ induces a map $f_*: \mathrm{Def}(V_0, A) \rightarrow \mathrm{Def}(V_0, A')$ sending the class of an $A$-representation $V$ to the class of $V\otimes_{A, f}A'$. The following theorem on universal deformation ring is proved in \cite{smit}.

\begin{thm} \label{thm:5.5} \emph{(cf. \cite[Theorem 2.3]{smit})} 
There exists a universal deformation ring $A_{\mathrm{univ}} \in \mathcal{C}$ and a deformation $V_{\mathrm{univ}} \in \mathrm{Def}(V_0, A_{\mathrm{univ}})$ such that for all $A \in \mathcal{C}$, we have a bijection
\begin{equation} \label{eq:5.1}
\mathrm{Hom}_{\mathcal{C}}(A_{\mathrm{univ}}, A) \stackrel{\cong}{\rightarrow} \mathrm{Def}(V_0, A)
\end{equation}
given by $f \mapsto f_*(V_{\mathrm{univ}})$. 
\end{thm}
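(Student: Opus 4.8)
The plan is to realize deformations as conjugacy classes of lifts of a matrix representation and then verify a representability criterion of Grothendieck--Schlessinger type, using the absolute irreducibility of $V_0$ to rigidify the problem. First I would fix an $\mathbf{F}_p$-basis of $V_0$ and regard it as a continuous homomorphism $\bar{\rho}\colon \mathcal{G}_R \to \mathrm{GL}_d(\mathbf{F}_p)$ with $d = \dim_{\mathbf{F}_p} V_0$. For $A \in \mathcal{C}$, a deformation in $A$ is then the same datum as a continuous lift $\rho\colon \mathcal{G}_R \to \mathrm{GL}_d(A)$ reducing to $\bar{\rho}$ modulo $\mathfrak{m}_A$, taken up to conjugation by the kernel $\widehat{\Gamma}_A \coloneqq \ker(\mathrm{GL}_d(A) \to \mathrm{GL}_d(\mathbf{F}_p))$. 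The key input is Schur's lemma: since $V_0$ is absolutely irreducible, $\mathrm{End}_{\mathbf{F}_p[\mathcal{G}_R]}(V_0) = \mathbf{F}_p$, and by successive approximation the $A$-linear endomorphisms of any lift $\rho$ commuting with $\mathcal{G}_R$ are scalars. This makes the functor rigid: no non-scalar element of $\widehat{\Gamma}_A$ fixes a given lift, so distinct conjugacy classes can never be glued ambiguously.

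Next I would restrict $D \coloneqq \mathrm{Def}(V_0, -)$ to the full subcategory $\mathcal{C}^{\mathrm{art}} \subset \mathcal{C}$ of discrete artinian objects and show it is left exact, i.e.\ that for any pair of maps $A' \to A \leftarrow A''$ in $\mathcal{C}^{\mathrm{art}}$ with $A'' \twoheadrightarrow A$ surjective the natural map $D(A' \times_A A'') \to D(A') \times_{D(A)} D(A'')$ is a bijection. Surjectivity amounts to gluing: given compatible classes over $A'$ and $A''$, I normalize their matrix representatives so that they literally coincide over $A$ --- possible exactly because the rigidity of the first step kills the conjugation ambiguity --- and the two lifts then patch to a lift over the fibre product. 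Injectivity is again rigidity: a conjugacy between two glued lifts that is trivial over $A'$ and $A''$ must be scalar, hence already trivial. With $D(\mathbf{F}_p)$ a single point, Grothendieck's pro-representability criterion furnishes an object $A_{\mathrm{univ}} \in \mathcal{C}$ pro-representing $D|_{\mathcal{C}^{\mathrm{art}}}$. I stress that I impose no finiteness on the tangent space $D(\mathbf{F}_p[\epsilon])$: since $\mathcal{G}_R$ need not satisfy Mazur's condition $\Phi_p$, this space may be infinite-dimensional and $A_{\mathrm{univ}}$ correspondingly non-Noetherian, which is precisely why the ambient category $\mathcal{C}$ and Proposition \ref{prop:5.4} are arranged to allow general pro-artinian rings.

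It then remains to extend from artinian test objects to all of $\mathcal{C}$. Every $A \in \mathcal{C}$ is by definition the projective limit of its discrete artinian quotients $A/\mathfrak{a}$, and a continuous lift $\mathcal{G}_R \to \mathrm{GL}_d(A)$ is the same as a compatible system of continuous lifts to the $\mathrm{GL}_d(A/\mathfrak{a})$. Passing to conjugacy classes and invoking rigidity once more to see that a compatible system of classes assembles uniquely into a single class, I obtain $D(A) = \varprojlim_{\mathfrak{a}} D(A/\mathfrak{a})$. Combined with the pro-representability of $D|_{\mathcal{C}^{\mathrm{art}}}$, this yields the universal pair $(A_{\mathrm{univ}}, V_{\mathrm{univ}})$ and the bijection (\ref{eq:5.1}) for every $A \in \mathcal{C}$, not merely the artinian ones.

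The main obstacle, and the point where the hypotheses earn their keep, is the absence of any finiteness assumption. In Mazur's classical framework one has a finite-dimensional tangent space and builds $A_{\mathrm{univ}}$ as a quotient of a finite power-series ring over $\mathbf{Z}_p$; here neither luxury is available, so I must produce $A_{\mathrm{univ}}$ as a genuine pro-artinian object and check that the universal deformation is continuous and that the representing bijection survives over non-Noetherian targets. Absolute irreducibility of $V_0$ is exactly the leverage that replaces the missing finiteness inputs: through Schur's lemma it makes every deformation rigid, removing the automorphism obstruction that would otherwise block both the gluing in the left-exactness step and the uniqueness in the limit step, and thereby securing representability in the strong form asserted.
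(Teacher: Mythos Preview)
The paper does not supply its own proof of this theorem; it simply records the statement and cites \cite[Theorem 2.3]{smit}. Your sketch is essentially a reconstruction of the argument in that reference: one rigidifies via Schur's lemma (absolute irreducibility), checks that the deformation functor on artinian objects satisfies the strong form of Schlessinger's conditions (bijection rather than surjection onto the fibre product), and then invokes Grothendieck-type pro-representability in the category $\mathcal{C}$ of pro-artinian local $\mathbf{Z}_p$-algebras, explicitly allowing the non-Noetherian case. So there is nothing to compare against in the paper itself, and your approach is both correct and aligned with the cited source.

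One small caution: the passage from ``compatible system of conjugacy classes over the $A/\mathfrak{a}$'' to ``single conjugacy class over $A$'' is a genuine inverse-limit step that needs the rigidity (trivial centralizer modulo scalars) at every finite level to guarantee that a coherent choice of representatives exists; you invoke this correctly, but it is worth making explicit that this is where the argument would fail for a merely Schur-rigid (as opposed to absolutely irreducible) $V_0$ without further hypotheses.
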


\noindent We remark that $A_{\mathrm{univ}}$ is Noetherian if and only if $\mathrm{dim}_{\mathbf{F}_p}H^1(\mathcal{G}_R, \mathrm{End}_{\mathbf{F}_p}(V_0))$ is finite (cf. \textit{loc. cit.}). Thus, $A_{\mathrm{univ}}$ is not Noetherian in general, even when $R = \mathcal{O}_K$ if $K/\mathbf{Q}_p$ is infinite.
   
Let $\mathcal{C}^0$ be the full subcategory of $\mathcal{C}$ consisting of artinian rings. Abusing the notation, we write $V \in \mathrm{Def}(V_0, A)$ for an $A$-representation $V$ to mean that $V\otimes_A \mathbf{F}_p \cong V_0$. For $A \in \mathcal{C}^0$ and a representation $V_A \in \mathrm{Def}(V_0, A)$, we say $V_A$ is \textit{torsion Barsotti-Tate} if there exists a $p$-power order finite locally free group scheme $H_R$ over $R$ such that $V_A \cong H_R(\overline{R})$ as $\mathbf{Z}_p[\mathcal{G}_R]$-modules. We remark that if $R$ is local, then every $p$-power order finite locally free group scheme over $R$ embeds into a $p$-divisible group over $R$, and thus $V_A$ is torsion Barsotti-Tate if and only if it is a quotient of a finite free $\mathbf{Z}_p$-representation which is Barsotti-Tate. For $A \in \mathcal{C}$, denote by $\mathrm{BT}(V_0, A)$ the subset of $\mathrm{Def}(V_0, A)$ consisting of the isomorphism classes of representations $V_A$ such that $V_A\otimes_A A/\mathfrak{a}$ is torsion Barsotti-Tate for all open ideals $\mathfrak{a} \subsetneq A$.

\begin{prop} \label{prop:5.6}
Assume $e < p-1$. For any $\mathcal{C}$-morphism $f: A \rightarrow A'$, we have $f_*(\mathrm{BT}(V_0, A)) \subset \mathrm{BT}(V_0, A')$. Furthermore, there exists a closed ideal $\mathfrak{a}_{\mathrm{BT}}$ of the universal deformation ring $A_{\mathrm{univ}}$ such that the map (\ref{eq:5.1}) induces a bijection $\mathrm{Hom}_{\mathcal{C}}(A_{\mathrm{univ}}/\mathfrak{a}_{\mathrm{BT}}, A) \stackrel{\cong}{\rightarrow} \mathrm{BT}(V_0, A)$. 	
\end{prop}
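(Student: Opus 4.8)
The plan is to first prove the functoriality, then construct $\mathfrak{a}_{\mathrm{BT}}$ as an intersection of open ideals and identify the Barsotti-Tate locus with its vanishing; throughout, the two directions of Lemma \ref{lem:5.2} (a sub- or quotient representation of a torsion Barsotti-Tate representation is torsion Barsotti-Tate) are the workhorse, and the main obstacle is a representability statement that I would settle by a compactness argument. For the functoriality, let $f \colon A \to A'$ be a $\mathcal{C}$-morphism and $V_A \in \mathrm{BT}(V_0, A)$. Given a proper open ideal $\mathfrak{a}' \subsetneq A'$, set $\mathfrak{a} = f^{-1}(\mathfrak{a}')$; since $f$ is local and continuous, $\mathfrak{a}$ is a proper open ideal of $A$ and $A/\mathfrak{a} \hookrightarrow A'/\mathfrak{a}'$ is injective. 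As $A'/\mathfrak{a}'$ is artinian with residue field $\mathbf{F}_p$ it is finite over $\mathbf{Z}_p$, hence finitely generated over $A/\mathfrak{a}$, and choosing generators yields a surjection $(A/\mathfrak{a})^{\oplus m} \twoheadrightarrow A'/\mathfrak{a}'$ and therefore a surjection of $\mathbf{Z}_p[\mathcal{G}_R]$-modules $(V_A \otimes_A A/\mathfrak{a})^{\oplus m} \twoheadrightarrow V_A \otimes_A A'/\mathfrak{a}' = f_*(V_A) \otimes_{A'} A'/\mathfrak{a}'$. By hypothesis $V_A \otimes_A A/\mathfrak{a}$ is torsion Barsotti-Tate, a finite direct sum of copies of it is torsion Barsotti-Tate (take the product of the corresponding group schemes over $R$), and the quotient is then torsion Barsotti-Tate by Lemma \ref{lem:5.2}. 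As $\mathfrak{a}'$ was arbitrary, $f_*(V_A) \in \mathrm{BT}(V_0, A')$.

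For the representability, let $S$ be the set of open ideals $\mathfrak{b} \subset A_{\mathrm{univ}}$ for which $V_{\mathrm{univ}} \otimes_{A_{\mathrm{univ}}} A_{\mathrm{univ}}/\mathfrak{b}$ is torsion Barsotti-Tate. Since a quotient of a torsion Barsotti-Tate representation is again torsion Barsotti-Tate, $S$ is upward closed among open ideals. It is closed under finite intersection: for $\mathfrak{b}_1, \mathfrak{b}_2 \in S$ the map $A_{\mathrm{univ}}/(\mathfrak{b}_1 \cap \mathfrak{b}_2) \hookrightarrow A_{\mathrm{univ}}/\mathfrak{b}_1 \times A_{\mathrm{univ}}/\mathfrak{b}_2$ is injective, and tensoring with the finite free module $V_{\mathrm{univ}}$ (which is exact) exhibits $V_{\mathrm{univ}} \otimes A_{\mathrm{univ}}/(\mathfrak{b}_1 \cap \mathfrak{b}_2)$ as a sub-$\mathbf{Z}_p[\mathcal{G}_R]$-module of the torsion Barsotti-Tate representation $(V_{\mathrm{univ}} \otimes A_{\mathrm{univ}}/\mathfrak{b}_1) \oplus (V_{\mathrm{univ}} \otimes A_{\mathrm{univ}}/\mathfrak{b}_2)$, whence it is torsion Barsotti-Tate by Lemma \ref{lem:5.2}. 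I would assume $V_0$ is itself torsion Barsotti-Tate, equivalently $\mathfrak{m}_{A_{\mathrm{univ}}} \in S$ (otherwise $\mathrm{BT}(V_0, -)$ is empty and one takes $\mathfrak{a}_{\mathrm{BT}} = A_{\mathrm{univ}}$). Define $\mathfrak{a}_{\mathrm{BT}} = \bigcap_{\mathfrak{b} \in S}\mathfrak{b}$, a closed ideal contained in $\mathfrak{m}_{A_{\mathrm{univ}}}$.

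The key claim, and the main obstacle, is that $S$ equals the set of all open ideals containing $\mathfrak{a}_{\mathrm{BT}}$; granting this, $A_{\mathrm{univ}}/\mathfrak{a}_{\mathrm{BT}}$ lies in $\mathcal{C}$ and $V_{\mathrm{univ}} \otimes A_{\mathrm{univ}}/\mathfrak{a}_{\mathrm{BT}} \in \mathrm{BT}(V_0, A_{\mathrm{univ}}/\mathfrak{a}_{\mathrm{BT}})$, because every artinian quotient of $A_{\mathrm{univ}}/\mathfrak{a}_{\mathrm{BT}}$ has the form $A_{\mathrm{univ}}/\mathfrak{b}$ with $\mathfrak{b} \in S$. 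To prove the claim I would use that $A_{\mathrm{univ}}$ is profinite: each artinian quotient is finite (artinian local with residue field $\mathbf{F}_p$), so $A_{\mathrm{univ}} = \varprojlim_{\mathfrak{c}} A_{\mathrm{univ}}/\mathfrak{c}$ is compact. Let $\mathfrak{c}$ be open with $\mathfrak{c} \supseteq \mathfrak{a}_{\mathrm{BT}}$ and $\pi \colon A_{\mathrm{univ}} \to A_{\mathrm{univ}}/\mathfrak{c}$ the projection. By the descending chain condition in the finite ring $A_{\mathrm{univ}}/\mathfrak{c}$ and the fact that $S$ is downward directed, the images $\pi(\mathfrak{b})$ for $\mathfrak{b}\in S$ attain a minimum $\pi(\mathfrak{b}_0)$. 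For any fixed $\bar x \in \pi(\mathfrak{b}_0)$ lifted to $x_0 \in \mathfrak{b}_0$, each closed set $(x_0 + \mathfrak{c}) \cap \mathfrak{b}$ with $\mathfrak{b} \in S$, $\mathfrak{b}\subseteq\mathfrak{b}_0$, is nonempty (as $\bar x \in \pi(\mathfrak{b})$), and these form a family with the finite intersection property; by compactness their intersection contains an element of $\bigcap_{\mathfrak{b}}\mathfrak{b} = \mathfrak{a}_{\mathrm{BT}} \subseteq \mathfrak{c}$ mapping to $\bar x$, forcing $\bar x = 0$. Thus $\pi(\mathfrak{b}_0) = 0$, i.e.\ $\mathfrak{b}_0 \subseteq \mathfrak{c}$, and upward closure gives $\mathfrak{c} \in S$.

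Finally, to exhibit the bijection, let $g \colon A_{\mathrm{univ}} \to A$ correspond to $V_A = g_*(V_{\mathrm{univ}})$ under \ref{eq:5.1}. If $g$ factors through $A_{\mathrm{univ}}/\mathfrak{a}_{\mathrm{BT}}$, then $V_A$ is the image of $V_{\mathrm{univ}} \otimes A_{\mathrm{univ}}/\mathfrak{a}_{\mathrm{BT}} \in \mathrm{BT}$ under the induced $\mathcal{C}$-morphism, so $V_A \in \mathrm{BT}(V_0, A)$ by the functoriality already proved. Conversely, if $V_A \in \mathrm{BT}(V_0, A)$, then for each proper open $\mathfrak{a} \subsetneq A$ the injection $A_{\mathrm{univ}}/g^{-1}(\mathfrak{a}) \hookrightarrow A/\mathfrak{a}$ induces, after tensoring with the free module $V_{\mathrm{univ}}$, an injection of $V_{\mathrm{univ}} \otimes A_{\mathrm{univ}}/g^{-1}(\mathfrak{a})$ into the torsion Barsotti-Tate module $V_A \otimes_A A/\mathfrak{a}$; by Lemma \ref{lem:5.2} the source is torsion Barsotti-Tate, so $g^{-1}(\mathfrak{a}) \in S$ and hence $g^{-1}(\mathfrak{a}) \supseteq \mathfrak{a}_{\mathrm{BT}}$. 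Intersecting over all such $\mathfrak{a}$ gives $\ker g = \bigcap_{\mathfrak{a}} g^{-1}(\mathfrak{a}) \supseteq \mathfrak{a}_{\mathrm{BT}}$, so $g$ factors through $A_{\mathrm{univ}}/\mathfrak{a}_{\mathrm{BT}}$. Together with \ref{eq:5.1} this gives the bijection $\mathrm{Hom}_{\mathcal{C}}(A_{\mathrm{univ}}/\mathfrak{a}_{\mathrm{BT}}, A) \xrightarrow{\cong} \mathrm{BT}(V_0, A)$.
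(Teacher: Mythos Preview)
Your proof is correct. The key inputs---Lemma~\ref{lem:5.2} for closure of the torsion Barsotti-Tate property under sub- and quotient representations, and the intersection argument showing $\mathfrak{b}_1,\mathfrak{b}_2\in S \Rightarrow \mathfrak{b}_1\cap\mathfrak{b}_2\in S$---are exactly those used in the paper. The difference is one of packaging: the paper verifies that (i) for an injection $A\hookrightarrow A'$ of artinian objects one has $V_A\in\mathrm{BT}$ iff $V_{A'}\in\mathrm{BT}$, and (ii) the intersection property holds, and then invokes \cite[Proposition~6.1]{smit} as a black box to obtain both the functoriality and the existence of $\mathfrak{a}_{\mathrm{BT}}$. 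You instead prove functoriality for arbitrary $\mathcal{C}$-morphisms directly, and then unwind the content of \cite[Proposition~6.1]{smit} by hand: defining $\mathfrak{a}_{\mathrm{BT}}=\bigcap_{\mathfrak{b}\in S}\mathfrak{b}$ and using the profinite compactness of $A_{\mathrm{univ}}$ to show that every open ideal containing $\mathfrak{a}_{\mathrm{BT}}$ already lies in $S$. Your route is more self-contained and makes explicit exactly where the profinite topology is used; the paper's route is shorter but hides this step in the reference.
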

 
\begin{proof}
We check the conditions in \cite[Section 6]{smit}. Let $f: A \hookrightarrow A'$ be an injective morphism of artinian rings in $\mathcal{C}$, and let $V_A \in \mathrm{Def}(V_0, A)$ be a representation. We first claim that $V_A \in \mathrm{BT}(V_0, A)$ if and only if $V_{A'} \coloneqq V_A\otimes_{A, f} A' \in \mathrm{BT}(V_0, A')$. Suppose that $V_A \in \mathrm{BT}(V_0, A)$. Note that $A'$ is a finite $A$-module. Let $x_1, \ldots, x_m$ generate $A'$ over $A$. Then we have a surjective map of $\mathbf{Z}_p[\mathcal{G}_R]$-modules $V_A^m \twoheadrightarrow V_{A'}$ sending the canonical basis elements $e_i$ of $V_A^m$ for $i = 1, \ldots, m$ to $x_i$. Since $V_A^m$ is the direct sum of $m$-copies of $V_A$, it is torsion Barsotti-Tate. Thus, by Lemma \ref{lem:5.2}, $V_{A'} \in \mathrm{BT}(V_0, A')$. Conversely, suppose $V_{A'}\in \mathrm{BT}(V_0, A')$. Since we have an injective map of $\mathbf{Z}_p[\mathcal{G}_R]$-modules $V_A \hookrightarrow V_{A'}$, we get $V_A \in \mathrm{BT}(V_0, A)$ by Lemma \ref{lem:5.2}.

Now, for $A \in \mathcal{C}$ and a representation $V_A \in \mathrm{Def}(V_0, A)$, suppose $\mathfrak{a}_1, \mathfrak{a}_2 \subsetneq A$ are open ideals such that $V_A\otimes_A (A/\mathfrak{a}_i) \in \mathrm{BT}(V_0, A/\mathfrak{a}_i)$ for $i = 1, 2$. The natural map $A/(\mathfrak{a}_1\cap \mathfrak{a}_2) \rightarrow A/\mathfrak{a}_1 \oplus A/\mathfrak{a}_2$ is injective, and it induces the injective map of $\mathbf{Z}_p[\mathcal{G}_R]$-modules
\[
V_A\otimes_A A/(\mathfrak{a}_1\cap \mathfrak{a}_2) \hookrightarrow (V_A\otimes_A A/\mathfrak{a}_1) \oplus (V_A\otimes_A A/\mathfrak{a}_2).
\] 	
Since the direct sum $(V_A\otimes_A A/\mathfrak{a}_1) \oplus (V_A\otimes_A A/\mathfrak{a}_2)$ is torsion Barsotti-Tate, we see from Lemma \ref{lem:5.2} that $V_A\otimes_A A/(\mathfrak{a}_1\cap \mathfrak{a}_2) \in \mathrm{BT}(V_0, A/(\mathfrak{a}_1\cap \mathfrak{a}_2))$.

The assertion then follows from \cite[Proposition 6.1]{smit}.
\end{proof}

We now show that when $e < p-1$, the locus of Barsotti-Tate representations cuts out a closed subscheme of the universal deformation scheme $\mathrm{Spec} (A_{\mathrm{univ}})$:

\begin{thm}
Suppose $e < p-1$ (and recall that the Krull dimension of $R$ is assumed to be equal to $2$). Let $A$ be a finite flat $\mathbf{Z}_p$-algebra equipped with the $p$-adic topology, and let $f: A_{\mathrm{univ}} \rightarrow A$ be a continuous $\mathbf{Z}_p$-algebra homomorphism. Then the induced representation $V_{\mathrm{univ}}\otimes_{A_{\mathrm{univ}}, f} A[\frac{1}{p}]$ of $\mathcal{G}_R$ is Barsotti-Tate if and only if $f$ factors through the quotient $A_{\mathrm{univ}}/\mathfrak{a}_{\mathrm{BT}}$.
\end{thm}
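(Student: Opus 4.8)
The plan is to reduce everything to the image $B \coloneqq f(A_{\mathrm{univ}}) \subseteq A$, which will turn out to lie in $\mathcal{C}$, so that the deformation-theoretic description of $\mathfrak{a}_{\mathrm{BT}}$ from Proposition \ref{prop:5.6} becomes available, and then to transfer the Barsotti-Tate property across the inclusion $B \hookrightarrow A$ by means of Corollary \ref{cor:5.3}. The point of this detour is that $A$ itself need not be local, so the formalism of Section \ref{sec:5} does not apply to $A$ directly.

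First I would analyze $B$. Since $A$ is finite over the Noetherian ring $\mathbf{Z}_p$ and is $\mathbf{Z}_p$-torsion free, the subring $B$ is a finitely generated torsion-free $\mathbf{Z}_p$-module, hence finite free over $\mathbf{Z}_p$; and being a quotient $A_{\mathrm{univ}}/\ker f$ of a local ring in $\mathcal{C}$, it is local with residue field $\mathbf{F}_p$, its $p$-adic topology agreeing with the $\mathfrak{m}_B$-adic one because $\mathfrak{m}_B/pB$ is nilpotent in the artinian ring $B/pB$. Thus $B \in \mathcal{C}$, and $f$ factors as $A_{\mathrm{univ}} \twoheadrightarrow B \hookrightarrow A$, with the first map $g$ a surjective $\mathcal{C}$-morphism and the inclusion $\iota$ injective. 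Writing $V_B \coloneqq V_{\mathrm{univ}}\otimes_{A_{\mathrm{univ}}, g} B$ and $V_A \coloneqq V_{\mathrm{univ}}\otimes_{A_{\mathrm{univ}}, f} A$, one has $V_A = V_B \otimes_B A$, and both are finite free $\mathbf{Z}_p$-representations of $\mathcal{G}_R$.

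Next I would run two chains of equivalences. On the Barsotti-Tate side: by Proposition \ref{prop:5.1} the $\mathbf{Q}_p$-representation $V_A[\frac{1}{p}]$ is Barsotti-Tate iff the finite free $\mathbf{Z}_p$-representation $V_A$ is, and by Corollary \ref{cor:5.3} applied to $\iota: B \hookrightarrow A$ this holds iff $V_B$ is Barsotti-Tate. On the factorization side: since $\iota$ is injective, $\ker f = \ker g$, so $f$ factors through $A_{\mathrm{univ}}/\mathfrak{a}_{\mathrm{BT}}$ iff $g$ does, and since $B \in \mathcal{C}$, Proposition \ref{prop:5.6} gives that $g$ factors through $A_{\mathrm{univ}}/\mathfrak{a}_{\mathrm{BT}}$ iff $V_B \in \mathrm{BT}(V_0, B)$. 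It therefore remains to prove
\[
V_B \text{ is Barsotti-Tate} \iff V_B \in \mathrm{BT}(V_0, B).
\]
For this I would first observe that $V_B \in \mathrm{BT}(V_0, B)$ is equivalent to $V_B/p^n V_B$ being torsion Barsotti-Tate for every $n \geq 1$: each open proper ideal $\mathfrak{a} \subsetneq B$ contains some $p^n B$, so $V_B \otimes_B B/\mathfrak{a}$ is a quotient of $V_B/p^n V_B$ and Lemma \ref{lem:5.2} applies, while conversely $p^n B$ is itself open and proper by flatness of $B$. Finally, if $V_B = T_p(G_R)$ then $V_B/p^n V_B \cong G_R[p^n](\overline{R})$ is torsion Barsotti-Tate; conversely, taking the $p$-divisible group $G$ over $R[\frac{1}{p}]$ with $T_p(G) = V_B$, the hypothesis supplies finite locally free models of each $G[p^n]$ over $R$, so Theorem \ref{thm:4.2} (this is where $e < p-1$ is used) extends $G$ to a $p$-divisible group $G_R$ over $R$ with $T_p(G_R) = V_B$.

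The main obstacle is precisely that $A$ is allowed to be non-local, which puts it outside the category $\mathcal{C}$ governing $\mathfrak{a}_{\mathrm{BT}}$; the device of replacing $A$ by its image $B \in \mathcal{C}$ and using Corollary \ref{cor:5.3} to descend the Barsotti-Tate property is what circumvents this. A secondary point needing care is the identification of ``$V_B/p^n V_B$ is torsion Barsotti-Tate'' with ``$G[p^n]$ extends over $R$'', which relies on the equivalence between finite locally free group schemes over $R[\frac{1}{p}]$ and their Galois modules together with the compatibility $T_p(G_R) = T_p(G)$ recorded at the end of Section \ref{sec:3}.
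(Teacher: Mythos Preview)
Your proof is correct and follows essentially the same route as the paper: pass to the image $B = f(A_{\mathrm{univ}}) \subset A$, verify $B \in \mathcal{C}$, use Proposition~\ref{prop:5.1} and Corollary~\ref{cor:5.3} to reduce the Barsotti-Tate question from $A$ to $B$, and then match the condition $V_B \in \mathrm{BT}(V_0,B)$ with $V_B$ being Barsotti-Tate via Proposition~\ref{prop:5.6} and Theorem~\ref{thm:4.2}. Your write-up is in fact slightly more careful than the paper's in two places: the verification that $B \in \mathcal{C}$ with the correct topology, and the explicit reduction from arbitrary open ideals $\mathfrak{a} \subsetneq B$ to the ideals $p^n B$ via Lemma~\ref{lem:5.2}.
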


\begin{proof}
Let $A_1 \coloneqq \mathrm{im}(f) \subset A$, and let $T_{A_1} = V_{\mathrm{univ}}\otimes_{A_{\mathrm{univ}}, f} A_1$. Then $T_{A_1}\otimes_{A_1} A = V_{\mathrm{univ}}\otimes_{A_{\mathrm{univ}}, f} A$, and by Proposition \ref{prop:5.1} and Corollary \ref{cor:5.3}, it suffices to show that $T_{A_1}$ is Barsotti-Tate if and only if $f$ factors through $A_{\mathrm{univ}}/\mathfrak{a}_{\mathrm{BT}}$. Note that $A_1 \in \mathcal{C}$, and since $A_1$ is finite flat over $\mathbf{Z}_p$, the topology on $A_1$ is equivalent to the $p$-adic topology and $f: A_{\mathrm{univ}} \rightarrow A_1$ is continuous by Proposition \ref{prop:5.4}. Suppose first that $T_{A_1}$ is Barsotti-Tate, so that there exists a $p$-divisible group $G_R$ over $R$ such that $T_p(G_R) \cong T_{A_1}$. For each integer $n \geq 1$, we then have $(V_{\mathrm{univ}}\otimes_{A_{\mathrm{univ}}, f} A_1)\otimes_{A_1} A_1/(p^n) = T_{A_1}/p^n \cong (G_R[p^n])(\overline{R})$, so $V_{\mathrm{univ}}\otimes_{A_{\mathrm{univ}}, f} A_1/(p^n) \in \mathrm{BT}(V_0, A_1/(p^n))$. Hence, by Proposition \ref{prop:5.6}, $f$ factors through $A_{\mathrm{univ}}/\mathfrak{a}_{\mathrm{BT}}$.

Conversely, suppose $f$ factors through $A_{\mathrm{univ}}/\mathfrak{a}_{\mathrm{BT}}$. Let $G$ be the $p$-divisible group over $R[\frac{1}{p}]$ corresponding to $T_{A_1}$. For each $n \geq 1$, $T_{A_1}/p^n$ is torsion Barsotti-Tate by Proposition \ref{prop:5.6}, so $G[p^n]$ extends to a finite locally free group scheme over $R$. Then by Theorem \ref{thm:4.2}, $T_{A_1}$ is Barsotti-Tate.  
\end{proof}

On the other hand, if the ramification is large, we can deduce that the locus of Barsotti-Tate representations is not $p$-adically closed in general:

\begin{prop} \label{prop:5.8}
Let $R = \mathcal{O}_K[\![s]\!]$ and suppose $e \geq p$. There exists a $\mathbf{Z}_p$-representation $T$ of $\mathcal{G}_R$ such that $T/p^nT$ is torsion Barsotti-Tate for each $n \geq 1$ but $T$ is not Barsotti-Tate.	
\end{prop}

\begin{proof}
By Theorem \ref{thm:4.2}, there exists a $p$-divisible group $G$ over $R[\frac{1}{p}]$ such that $G[p^n]$ extends to a finite locally free group scheme $G_{n, R}$ over $R$ but $G$ does not extend to a $p$-divisible group over $R$. Let $T$ be the representation corresponding to $G$. Then for each $n$, we have $T/p^nT \cong G_{n, R}(\overline{R})$ so it is torsion Barsotti-Tate. However, $T$ is not Barsotti-Tate since $G$ does not extend over $R$.	
\end{proof}

\bibliographystyle{amsalpha}
\bibliography{library}
	
\end{document}